\documentclass[11pt]{amsart}
\usepackage{tikz,amsthm,amsmath,amstext,amssymb,amscd,epsfig,euscript, mathrsfs, dsfont,pspicture,multicol,graphpap,graphics,graphicx,times,enumerate,subfig,sidecap,wrapfig,color}
%symbols
\usepackage{amsmath}
\usepackage{amssymb}

\usepackage{pgf}

	\definecolor{tropicalrainforest}{rgb}{0.0, 0.46, 0.37}

% DEFINITIONS AND REDEFINITIONS OF COMMANDS

       % Field of complex numbers
\newcommand{\M}{{\mathcal M}}       %
\newcommand{\R}{{\mathbb R}}       % Field of real numbers
\newcommand{\N}{{\mathbb N}}       %
       % Ring of integer numbers
\newcommand{\DD}{{\mathcal D}}
\newcommand{\CC}{{\mathcal C}}

\newcommand{\HH}{{\mathcal H}}

\newcommand{\RR}{{\mathcal R}}

\newcommand{\diam}{\mathop{\rm diam}}
\newcommand{\dist}{{\rm dist}}

\newcommand{\fiproof}{{\hspace*{\fill} $\square$ \vspace{2pt}}}

\newcommand{\rf}[1]{{(\ref{#1})}}

\newcommand{\supp}{\operatorname{supp}}
\newcommand{\CCap}{\operatorname{Cap}}

\newcommand{\vphi}{{\varphi}}
\newcommand{\ve}{{\varepsilon}}
\newcommand{\vv}{{\vspace{2mm}}}
\newcommand{\vvv}{\vspace{4mm}}
%{{\vspace{3mm}}}
\newcommand{\wt}[1]{{\widetilde{#1}}}

\newcommand{\bad}{{\mathsf{Bad}}}
\newcommand{\good}{{\mathsf{Good}}}

\newcommand{\HD}{{\mathsf{HD}}}
\newcommand{\LM}{{\mathsf{LM}}}

\def\Xint#1{\mathchoice
{\XXint\displaystyle\textstyle{#1}}%
{\XXint\textstyle\scriptstyle{#1}}%
{\XXint\scriptstyle\scriptscriptstyle{#1}}%
{\XXint\scriptscriptstyle\scriptscriptstyle{#1}}%
\!\int}
\def\XXint#1#2#3{{\setbox0=\hbox{$#1{#2#3}{\int}$ }
\vcenter{\hbox{$#2#3$ }}\kern-.58\wd0}}

\def\avint{\Xint-}

\textwidth14cm
\textheight21.5cm
\evensidemargin1.1cm
\oddsidemargin1.1cm

\addtolength{\headheight}{5.2pt}    %% leave room for symbol in header

\newtheorem{theorem}{Theorem}[section]
\newtheorem{lemma}[theorem]{Lemma}

\newtheorem{propo}[theorem]{Proposition}

\newtheorem*{lemma*}{Lemma}
\newtheorem*{theorem*}{Theorem}

\theoremstyle{definition}

\theoremstyle{remark}
\newtheorem{rem}[theorem]{\bf Remark}

\numberwithin{equation}{section}

\newcommand{\brem}{\begin{rem}}
\newcommand{\erem}{\end{rem}}

%%%%%%% Chema commands

\newcommand{\re}{{\mathbb R}}
\newcommand{\pom}{{\partial\Omega}}

% ***************************************************************************

\begin{document}

\title[Harmonic measure and rectifiability]{Absolute continuity between the surface measure and harmonic measure implies rectifiability}

\author[S. Hofmann]{Steve Hofmann}

\address{Steve Hofmann
\\
Department of Mathematics
\\
University of Missouri
\\
Columbia, MO 65211, USA} \email{hofmanns@missouri.edu}

\author[J.M. Martell]{Jos\'e Mar{\'\i}a Martell}

\address{Jos\'e Mar{\'\i}a Martell
\\
Instituto de Ciencias Matem\'aticas CSIC-UAM-UC3M-UCM
\\
Consejo Superior de Investigaciones Cient{\'\i}ficas
\\
C/ Nicol\'as Cabrera, 13-15
\\
E-28049 Madrid, Spain} \email{chema.martell@icmat.es}

\author[S. Mayboroda]{Svitlana Mayboroda}

\address{Svitlana Mayboroda
\\
Department of Mathematics
\\
University of Minnesota
\\
Minneapolis, MN 55455, USA} \email{svitlana@math.umn.edu}

\author[X. Tolsa]{Xavier Tolsa}
\address{Xavier Tolsa
\\
ICREA and Departament de Matem\`atiques
\\
Universitat Aut\`onoma de Barcelona
\\
Edifici C Facultat de Ci\`encies
\\
08193 Bellaterra (Barcelona), Catalonia
}
\email{xtolsa@mat.uab.cat}

\author[A. Volberg]{Alexander Volberg}
\address{Alexander Volberg
\\
Department of Mathematics
\\
Michigan State University
\\
East Lansing, MI 48824, USA}

\email{volberg@math.msu.edu}

%\author[S. Hofmann, J.\ M.\ Martell, S. Mayboroda, X. Tolsa and A. Volberg]{Steve Hofmann, Jos\'e Mar{\'\i}a Martell, Svitlana Mayboroda, Xavier Tolsa and Alexander %Volberg}

\thanks{The first author was partially supported by NSF grant DMS 1361701. The second author has been supported in part by ICMAT Severo Ochoa project SEV-2011-0087 and he acknowledges that the research leading to these results has received funding from the European Research Council under the European Union's Seventh Framework Programme (FP7/2007-2013)/ ERC agreement no. 615112 HAPDEGMT. The third author is supported in part by the Alfred P. Sloan Fellowship, the NSF INSPIRE Award DMS 1344235, NSF CAREER Award DMS 1220089 and  NSF UMN MRSEC Seed grant DMR 0212302. The fourth author was supported by the ERC grant 320501 of the European Research Council (FP7/2007-2013),  by 2014-SGR-75 (Catalonia), MTM2013-44304-P (Spain), and by the Marie Curie ITN MAnET (FP7-607647). The last author
was partially supported by the NSF grant DMS-1265549.
The results of this paper were obtained while the authors were participating in the \textit{Research in Paris} program at the \textit{Institut Henri Poincar\'e}. All authors would like to express their gratitude to this institution for the support and the nice working environment.}

\begin{abstract}
In the present paper we prove that for any open connected set $\Omega\subset\R^{n+1}$, $n\geq 1$, and any $E\subset \pom$ with $0<\HH^n(E)<\infty$ absolute continuity of the harmonic measure $\omega$ with respect to the Hausdorff measure on $E$ implies that $\omega|_E$ is rectifiable.
\end{abstract}

\maketitle

\tableofcontents

% ***************************************************************************

\section{Introduction}

In \cite{AMT} the authors showed that absolute continuity of the harmonic measure with respect to the Hausdorff measure on $E$ implies rectifiability of $\omega|_E$ under an additional assumption that $\Omega$ is {\it porous} in a neighborhood of $E$. In the present work we remove the aforementioned porosity assumption. This manuscript will be combined with \cite{AMT} for publication, and for that reason we only present a shortened version of the introduction highlighting the statements of the final results.

Our main result is the following.

\begin{theorem}\label{teo1}
Let $n\geq 1$ and $\Omega\subsetneq\R^{n+1}$ be an open  connected set
and let $\omega:=\omega^p$ be the harmonic measure in $\Omega$ where $p$ is a fixed point in $\Omega$.
Suppose that there exists $E\subset\partial\Omega$ with $0<\HH^n(E)<\infty$ and that the harmonic measure $\omega|_E$ is absolutely continuous with respect to $\HH^n|_{E}$. Then
 $\omega|_E$ is $n$-rectifiable, in the sense that  $\omega$-almost all of $E$ can be covered by a countable union of $n$-dimensional (possibly rotated) Lipschitz graphs.
\end{theorem}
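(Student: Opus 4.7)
The plan is to reduce Theorem \ref{teo1} to the porous case treated in \cite{AMT}. I would proceed in two stages: first a measure-theoretic localization, and then a potential-theoretic argument producing quantitative porosity of $\Omega$ at most scales near $\omega$-a.e. point of $E$.

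For the localization, I would use standard differentiation-of-measures arguments to reduce to a compact subset $F \subset E$ with $\omega(F)>0$ on which $\HH^n|_F$ is upper Ahlfors $n$-regular and the Radon-Nikodym derivative $d\omega/d\HH^n|_F$ is bounded away from $0$ and $\infty$. A Vitali-Besicovitch selection combined with Egorov- and Lusin-type arguments makes these bounds uniform, with universal constants, on $F$. After this, $n$-rectifiability of $\omega|_F$ in the sense of Theorem \ref{teo1} is equivalent to $n$-rectifiability of $F$ in the classical geometric measure theory sense, so it suffices to establish the latter.

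The core step is to prove that, at $\omega$-a.e.\ point $x \in F$ and at essentially all sufficiently small scales $r$, the open set $\Omega$ is porous in $B(x,r)$, meaning there is a ball of radius comparable to $r$ contained in $B(x,r)\setminus\Omega$. Heuristically, if porosity failed on a set of scales of positive density at $x$, then $\Omega$ would fill $B(x,r)$ so thickly that, via Bourgain-type lower bounds on harmonic measure of the complement and capacity/Wiener estimates, $\omega(B(x,r))$ would be forced to decay faster than $r^n$, contradicting the lower density bound on $d\omega/d\HH^n$. Making this quantitative requires a stopping-time extraction of ``bad'' (non-porous) scales, a Carleson-packing estimate on the stopping cubes, and a careful use of the maximum principle and Green's function lower bounds, none of which presuppose regularity of $\Omega$.

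Once quantitative porosity is achieved on a subset of positive $\omega$-measure, the hypotheses of \cite{AMT} are satisfied, so that result yields $n$-rectifiability on the subset. A countable exhaustion over such subsets completes the proof. The main obstacle is the porosity extraction: the standard elliptic PDE tools (Bourgain's lemma, boundary Hölder continuity, doubling of harmonic measure) typically rely on some a priori regularity of $\Omega$, and one must carefully decouple them from that hypothesis here. I would expect the argument to hinge on a new quantitative inequality relating the local harmonic measure density to the local porosity of $\Omega^c$, established through a delicate stopping-time/Carleson scheme.
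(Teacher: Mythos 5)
The central step you propose, extracting quantitative porosity of $\partial\Omega$ at most scales from absolute continuity of harmonic measure, does not go through, and in fact is precisely the obstruction the paper works to \emph{circumvent} rather than establish. Your heuristic is backwards: Bourgain's lemma (Lemma \ref{lembourgain}) gives a \emph{lower} bound on $\omega(B(x,r))$ in terms of the Hausdorff content of $\partial\Omega$ near $x$, so a thick, non-porous boundary only makes that lower bound stronger; nothing forces $\omega(B(x,r))$ to decay faster than $r^n$. Absolute continuity plus the density bounds on your compact set $F$ control $\omega(B(x,r))/r^n$ from above and below, but this simply does not produce a ball of comparable radius in $\R^{n+1}\setminus\partial\Omega$ inside $B(x,r)$. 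No inequality linking local harmonic-measure density to porosity is proved or used in the paper, and reducing to \cite{AMT} along this route is a dead end.

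What the paper does instead is modify the one place in \cite{AMT} where porosity entered, namely the Key Lemma bounding $\RR_{r(B_Q)}\omega^p$ at good cubes. The new proof of that lemma integrates by parts against the Green function, using that $G(\cdot,p)\in W^{1,2}_0(\Omega)$ away from $p$, together with Caccioppoli's inequality and the comparison $\omega_\Omega^x(aB)\gtrsim \inf_{z\in 2B\cap\Omega}\omega_\Omega^z(aB)\,r^{n-1}G(x,y)$ from Lemma \ref{l:w>G}; porosity is never invoked. However, this machinery genuinely needs Wiener regularity of $\Omega$ (for $G\in W^{1,2}_0$, for the maximum-principle argument, for continuity of solutions up to the boundary), which your proposal also does not address. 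The paper handles it by a separate construction: the set of Wiener-irregular boundary points has zero capacity (Kellogg), so one removes from $\Omega$ a Besicovitch family of small closed balls contained in a sublevel set of a potential blowing up on the irregular set, checks that the modified domain $\widetilde\Omega$ is Wiener regular, and shows via maximum-principle comparisons that $\widetilde\omega^p$ is still absolutely continuous on a positive-measure subset of $F$. Both the integration-by-parts Key Lemma and the Wiener-regularization step are needed, and the porosity-extraction scheme you set up should be discarded.
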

\vv

%Recall that a set $E$ is $n$-rectifiable if it can be covered by a countable union of (possibly rotated) $n$-dimensional Lipschitz graphs up to a set of zero $n$-dimensional Hausdorff measure $\HH^n$.
Deep connections between absolute continuity of the harmonic measure and rectifiability of the underlying set have for a long time been a subject of thorough investigation.
In 1916 F. and M. Riesz proved that for a simply connected domain in the complex plane, with a rectifiable boundary, harmonic measure is absolutely continuous with respect to arclength measure on
the boundary \cite{RR}. More generally, if only a portion of the boundary is
rectifiable, Bishop and Jones \cite{BJ} have shown that harmonic measure is absolutely
continuous with respect to arclength on that portion. They also demonstrate that the result of \cite{RR} may fail in the absence of some topological hypothesis (e.g., simple connectedness).

The higher dimensional analogues of \cite{BJ} include absolute continuity of harmonic measure with respect to the Hausdorff measure for Lipschitz graph domains \cite{Da} and non-tangentially accessible (NTA) domains \cite{DJ}, \cite{Se}. To be precise, \cite{Da}, \cite{DJ}, \cite{Se} establish a quantitative scale-invariant result, $A^\infty$ property of harmonic measure, which in the planar case was proved by Lavrent'ev \cite{Lv}. We shall not give a precise definition of NTA domains here, but let us mention that they necessarily satisfy interior and exterior cork-screw condition as well as Harnack chain condition, that is, certain quantitative analogues of connectivity and openness, respectively. Similarly to the lower-dimensional case, the counterexamples show that some topological restrictions are needed for absolute continuity of $\omega$ with respect to $\HH^n$ \cite{Wu}, \cite{Z}.

In the present paper we attack the converse direction, in the spirit of free boundary problems. We establish that rectifiability is {\it necessary} for absolute continuity of the harmonic measure. The main two antecedents of our work are \cite{AMT} and \cite{HMIV}. As mentioned above, in  \cite{AMT} the authors prove that absolute continuity of the harmonic measure with respect to $\HH^n$ implies rectifiability under a background hypotheses that the domain $\Omega$ is {\it porous} near $E\subset \pom$, that is, there is $r_0>0$ so that every ball $B$ centered at $E$ of radius at most $r_0$ contains another ball $B'\subset \R^{n+1}\setminus\pom$ with $r(B)\approx r(B')$, with the implicit constant depending only on $E$. In \cite{HMIV} the authors establish a quantitative analogue of this result, connecting weak-$A^\infty$ property of the harmonic measure to uniform rectifiability of the boundary of the non-necessarily connected domain. However, their background conditions (Ahlfors-David regularity of $\partial\Omega$) naturally include porosity as well.

The main achievement of the present work lies in removing the porosity assumption and establishing rectifiability of $\omega|_E$ with no a priori requirements on topological structure of the set.

We note that in Theorem~\ref{teo1} connectivity is just a cosmetic assumption needed to make sense of harmonic measure at a given pole. In the presence of multiple components, one can work with one component at a time.

We also remark that in the course of the proof of our main result we may assume that $\Omega$ is bounded.  Otherwise, we take any open ball $B$ so that $2\,B\subset \Omega$ then consider $\widetilde{\Omega}=\Omega\setminus \overline{B}$  and then the two harmonic measures (the one for $\Omega$ and the one for $\widetilde{\Omega}$) are mutually absolutely continuous on $\partial\Omega$. Then by Kelvin transform with respect to the center of the ball we can reduce matters to the case of a bounded domain. Further details are left to the interested reader.

To wrap up the discussion of the background, let us mention that a crucial ingredient of our argument, as well as that of \cite{AMT}, is the recent resolution of the David-Semmes conjecture in \cite{NToV1}, \cite{NToV2}. According to the latter, boundedness of the Riesz transforms implies rectifiability of the underlying set, and the core of the present work lies in some intricate estimates on the harmonic measure and the Green function which ultimately yield desired bounds on the Riesz transform.

Finally, we remind the reader that this paper will be combined with \cite{AMT} and a more detailed historical context will be discussed in the combined manuscript.

% ***************************************************************************

\vv
\section{Preliminaries}

Given $A\subset \R^{n+1}$, we denote $n$-dimensional Hausdorff measure by $\HH^n(A)$, and its $n$-dimensional Hausdorff
content by $\HH_\infty^n(A)$.

Given a signed Radon measure $\nu$ in $\R^{n+1}$ we consider the $n$-dimensional Riesz
transform
$$\RR\nu(x) = \int \frac{x-y}{|x-y|^{n+1}}\,d\nu(y),$$
whenever the integral makes sense. For $\ve>0$, its $\ve$-truncated version is given by
$$\RR_\ve \nu(x) = \int_{|x-y|>\ve} \frac{x-y}{|x-y|^{n+1}}\,d\nu(y).$$
%For $\delta\geq0$
% we set
%$$\RR_{*,\delta} \nu(x)= \sup_{\ve>\delta} |\RR_\ve \nu(x)|.$$
We also consider the maximal operators
$$\M^n\nu(x) = \sup_{r>0}\frac{|\nu|(B(x,r))}{r^n},$$
and for $\ve\geq0$,
$$\M^n_\ve\nu(x) = \sup_{r>\ve}\frac{|\nu|(B(x,r))}{r^n}.$$
%In the case $\delta=0$ we write $\RR_{*} \nu(x):= \RR_{*,0} \nu(x)$ and $M^n\nu(x):=M^n_0\nu(x)$.

\vv
The following is a variant of a well known estimate due to Bourgain (see \cite{B87}).

\begin{lemma}[{\cite[Lemma 4.1]{AMT}}]
\label{lembourgain}
There is $\delta_{0}>0$ depending only on $n\geq 1$ so that the following holds for $\delta\in (0,\delta_{0})$. Let $\Omega\subsetneq \R^{n+1}$ be a domain, $\xi \in \partial \Omega$, $r>0$, $B=B(\xi,r)$, and set $\rho:=\mathcal H_\infty^{s}(\partial\Omega\cap \delta B)/(\delta r)^{s}$ for some $s>n-1$. Then
\begin{equation}\label{eqbet1}
\omega_{\Omega}^{x}(B)\gtrsim_{n} \rho\quad \mbox{  for all }x\in \delta B\cap \Omega.
\end{equation}
\end{lemma}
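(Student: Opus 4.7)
The plan is a classical Bourgain-type argument: build a Newtonian potential of a Frostman measure on $F := \overline{\delta B} \cap \partial\Omega$ and test it against $\omega^x$ via the maximum principle. If $\HH_\infty^s(F) = 0$ then $\rho = 0$ and there is nothing to prove, so assume $\HH_\infty^s(F) > 0$. Frostman's lemma furnishes a nonnegative Radon measure $\mu$ supported in $F$ with $\mu(B(y,t)) \leq t^s$ for every ball and $\mu(F) \gtrsim_n \HH_\infty^s(F) = \rho(\delta r)^s$. Rescaling $\mu$ by $\rho(\delta r)^s/\mu(F)$ so that $\mu(F) = \rho(\delta r)^s$ exactly---the local growth becoming $\mu(B(y,t)) \leq C_n t^s$ for a possibly larger dimensional constant---I would consider the potential
$$U(x) \;:=\; \int \frac{d\mu(y)}{|x-y|^{n-1}}.$$
Because $|x|^{-(n-1)}$ is, up to a dimensional constant, the fundamental solution of $-\Delta$ in $\R^{n+1}$, $U$ is harmonic on $\R^{n+1} \setminus \supp \mu$, and in particular on $\Omega$; because $s > n-1$, the Frostman growth of $\mu$ moreover makes $U$ continuous and bounded on all of $\R^{n+1}$.

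The core is to establish three pointwise bounds on $U$. For $x \in \delta B$, every $y \in \supp \mu$ satisfies $|x-y| \leq 2\delta r$, so
$$U(x) \;\geq\; (2\delta r)^{-(n-1)}\,\mu(F) \;\gtrsim_n\; \rho\,(\delta r)^{s-n+1}.$$
For any $x$ with $|x-\xi| \geq r$ (and $\delta \leq 1/2$), one has $|x - y| \geq r/2$ for all $y \in \supp \mu$, hence
$$U(x) \;\leq\; (r/2)^{-(n-1)}\,\mu(F) \;\lesssim_n\; \rho\, \delta^s r^{s-n+1}.$$
Finally, the layer-cake identity $U(x) = (n-1)\int_0^\infty \mu(B(x,t))\,t^{-n}\,dt$---split at $t = \delta r$, using $\mu(B(x,t)) \lesssim t^s$ for $t \leq \delta r$ (where the resulting integral converges precisely because $s > n-1$) and $\mu(B(x,t)) \leq \mu(F) \leq (\delta r)^s$ for $t \geq \delta r$---yields the uniform bound
$$U(x) \;\lesssim_n\; (\delta r)^{s-n+1} \quad\text{for every } x \in \R^{n+1}.$$

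Since $U$ is continuous on $\overline{\Omega}$ and harmonic in $\Omega$, the maximum principle together with these boundary estimates gives, for every $x \in \Omega$,
$$U(x) \;\leq\; \bigl(\sup\nolimits_{\partial\Omega \cap B} U\bigr)\,\omega^x(B) + \bigl(\sup\nolimits_{\partial\Omega \setminus B} U\bigr)\,(1-\omega^x(B)) \;\leq\; C_n (\delta r)^{s-n+1}\,\omega^x(B) + C_n\, \rho\, \delta^s r^{s-n+1}.$$
For $x \in \delta B \cap \Omega$, combining with the lower bound and dividing by $(\delta r)^{s-n+1}$ gives
$$c_n\, \rho \;\leq\; C_n\, \omega^x(B) + C_n\, \rho\, \delta^{n-1}.$$
Setting $\delta_0 := (c_n/(2C_n))^{1/(n-1)}$ makes the last term absorbable into the left-hand side for every $\delta \in (0,\delta_0)$, yielding $\omega_\Omega^x(B) \gtrsim_n \rho$.

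The step most apt to be overlooked is the normalization $\mu(F) = \rho(\delta r)^s$: this is what forces the far-field upper bound on $U$ to carry a factor of $\rho$, so that the error above is $\rho\,\delta^{n-1}$ rather than $\delta^{n-1}$ and can be absorbed even when $\rho$ is small. A minor technical point is that when $\Omega$ is unbounded the maximum principle still applies since $U(x) \to 0$ as $|x| \to \infty$, so $U|_{\overline\Omega}$ is effectively a bounded continuous function on a compact set via the one-point compactification.
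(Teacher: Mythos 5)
Your argument is the standard Bourgain potential--theoretic proof and is essentially correct for $n\geq 2$, which is the same approach one would expect behind the cited Lemma~4.1 of \cite{AMT}. However, it fails for $n=1$, which is explicitly within the scope of the statement (the constant $\delta_0$ is to depend ``only on $n\geq 1$''), and the $n=1$ case is in fact invoked in the planar section of the paper. The failure is twofold. First, for $n=1$ the kernel $|x-y|^{-(n-1)}\equiv 1$, so your $U\equiv \mu(F)$ is constant and carries no information; the genuine fundamental solution in $\R^{2}$ is logarithmic, and one should instead work with something like $U(x)=\int\bigl(\log\tfrac{Cr}{|x-y|}\bigr)_{+}\,d\mu(y)$. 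Second, and more tellingly in your scheme, the absorption step produces $c_n\rho\leq C_n\,\omega^x(B)+C_n\,\rho\,\delta^{n-1}$, so the error term shrinks only because $\delta^{n-1}\to 0$; for $n=1$ the exponent is zero and your formula $\delta_0=(c_n/(2C_n))^{1/(n-1)}$ is undefined. With the logarithmic kernel the mechanism is different in kind: the near-field lower bound grows like $\mu(F)\log(1/\delta)$, the far-field upper bound stays $O(\mu(F))$, and the everywhere bound is $O\bigl((\delta r)^s\log(1/\delta)\bigr)$; dividing by $(\delta r)^s\log(1/\delta)$ one gets $\rho\bigl(1-C/\log(1/\delta)\bigr)\lesssim\omega^x(B)$, and $\delta_0$ is chosen to make $\log(1/\delta_0)$ large. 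You must supply this separate argument to cover $n=1$.

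Two smaller points for the $n\geq 2$ case: the inequality $\mu(F)\leq(\delta r)^s$ used in the tail of the layer-cake estimate is not literally true (one only has $\rho\lesssim_n 1$, hence $\mu(F)\lesssim_n(\delta r)^s$), though this only changes a constant; and your $\delta_0$ should also be capped by $1/2$ to justify the far-field estimate $|x-y|\geq r/2$. Neither affects the substance, but both are worth fixing in a clean write-up.
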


% ***************************************************************************

\section{Proof of Theorem \ref{teo1} for bounded Wiener regular domains $\re^{n+1}$, $n\ge 2$}

Our goal in this section consists in proving Theorem \ref{teo1} under the additional assumption that
$\Omega$ is bounded and that the domain is Wiener regular (that is, all boundary points are Wiener regular). We will work in $\re^{n+1}$, $n\ge 2$. Let us first rescall the definition of Wiener regular points.

For $n\geq2$, the Newtonian potential of a measure $\mu$ in $\R^{n+1}$ is defined as
$$
U^\mu(x)
:= \int \frac1{|x-y|^{n-1}}\,d\mu(y).
$$
The Newtonian capacity of Borel compact set $A\subset \R^{n+1}$ is defined by
$$
\CCap(A) = \sup\{\mu(A):\,U^\mu(x)\leq 1,\ \,\forall x\in\R^{n+1}\}.
$$

Given $\Omega\subsetneq\re^{n+1}$. We say that a point $x\in\partial\Omega$ is Wiener regular for $\Omega$ (or just regular) if
$$
\int_0^1 \frac{{\rm Cap}(A(x,r, 2r)\cap \Omega^c)}{r^{n-1}}\,\frac{dr}{r} = \infty,
$$
where $A(x,r,s)$, $r<s$, stands for the open annulus $B(x,s)\setminus \overline{B(x,r)}$.
If $x$ is not regular, we say that it is irregular. We say that $\Omega$ is Wiener regular if every $x\in\pom$ is Wiener regular.

\begin{propo}\label{propreg}
Let $n\geq 2$ and $\Omega\subset\R^{n+1}$ be a bounded  open  connected set
and let $\omega:=\omega^p$ be the harmonic measure in $\Omega$ where $p$ is a fixed point in $\Omega$. Assume further that $\Omega$ is Wiener regular. Suppose that there exists $E\subset\partial\Omega$ with $0<\HH^n(E)<\infty$ and that the harmonic measure $\omega|_E$ is absolutely continuous with respect to $\HH^n|_{E}$. Then
 $\omega|_E$ is $n$-rectifiable, in the sense that  $\omega$-almost all of $E$ can be covered by a countable union of $n$-dimensional (possibly rotated) Lipschitz graphs.
\end{propo}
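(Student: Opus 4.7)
The plan is to apply the Nazarov-Tolsa-Volberg theorem characterizing rectifiability via boundedness of the Riesz transform to a carefully chosen subset of $E$. First I would discard $\omega$-null sets and pass to a compact ``good'' set $F\subset E$ on which the Radon-Nikodym density $d\omega/d\HH^n|_E$ and its reciprocal are both bounded; this is possible by the standard differentiation theorem since $\omega|_E\ll\HH^n|_E$ and $\HH^n(E)<\infty$. On such $F$ the restricted measure $\omega|_F$ is mutually comparable to $\HH^n|_F$ and in particular is upper $n$-regular. Proving that $\omega|_F$ is $n$-rectifiable for every such $F$ will give the conclusion after exhausting $E$ by a countable family corresponding to a sequence of density thresholds.

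The central task is then to control the truncated Riesz transforms of $\omega|_F$ in $L^2(\omega|_F)$. The key identity $\RR(\omega^p)(x)=c_n\,\nabla_x G(p,x)$ for $x\in\Omega$, where $G$ is the Green function with pole at $p$, reduces this to pointwise bounds for $\nabla G$ at points approaching $F$ from inside $\Omega$. To produce such points I would invoke Lemma~\ref{lembourgain}: since $\omega|_F$ is comparable to $\HH^n|_F$, for many scales and locations near $F$ one has $\HH_\infty^n(\partial\Omega\cap B)\gtrsim r(B)^n$, so Bourgain's estimate yields $\omega^x(B)\gtrsim 1$ for $x$ in a small concentric ball intersected with $\Omega$. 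Combined with a Caffarelli-Fabes-Mortola-Salsa-type comparison $G(p,x)\approx \omega^p(B(\xi,r))/r^{n-1}$ at such a ``surrogate corkscrew'' $x$, and interior gradient estimates for positive harmonic functions, this gives the sought pointwise bound on $|\nabla G|$.

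The main obstacle, and the heart of the difficulty relative to \cite{AMT}, is that without porosity we cannot guarantee the existence of genuine interior corkscrew balls of a definite size at every scale. I would compensate for this by running a David-Mattila type stopping-time decomposition of the support of $\omega|_F$: on ``good'' generations of dyadic cubes for which a surrogate corkscrew can be located, the argument above controls the Riesz transform; on ``bad'' cubes where no surrogate exists, Wiener regularity of $\Omega$ forces the capacitary thickness of $\Omega^c$ to accumulate quantitatively nearby, and I would use this to derive a Carleson-type packing estimate ensuring that bad cubes are sparse. A Cotlar- or $T(1)$-type argument, feeding the pointwise estimates on good cubes through the packing condition on bad ones, would then yield the desired $L^2(\omega|_F)$-boundedness of $\RR$, after which the Nazarov-Tolsa-Volberg theorem delivers the $n$-rectifiability of $\omega|_F$, and hence that of $\omega|_E$.
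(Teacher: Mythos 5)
Your overall scaffolding (compact subset with bounded density, Frostman-type growth, David--Mattila cubes, Nazarov--Tolsa--Volberg as the final step) matches the paper, but the core of your argument --- the way you propose to bound the Riesz transform without porosity --- has a genuine gap, in fact two. First, Lemma~\ref{lembourgain} does not produce a ``surrogate corkscrew'': it gives $\omega^x(B)\gtrsim 1$ for \emph{every} $x\in\delta B\cap\Omega$, but it says nothing about the existence of a point $x$ with $\dist(x,\pom)\approx r(B)$. Without such a point, the tools you invoke degenerate: a CFMS-type comparison $G(p,x)\approx\omega^p(B)/r^{n-1}$ has no known proof in this generality (it is tied to NTA-type interior access), and the interior gradient estimate only yields $|\nabla G(x,p)|\lesssim G(x,p)/\dist(x,\pom)$, which is uncontrolled when $\dist(x,\pom)\ll r$; likewise the passage from the identity $\RR\omega^p=c\,\nabla G(\cdot,p)$ at an interior point to a bound for the truncated transform at a boundary point of the cube costs error terms of size $\M^n\omega^p$ times $r/\dist(x,\pom)$. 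Second, your fallback for ``bad'' cubes is unsubstantiated: Wiener regularity is a purely qualitative condition (divergence of the Wiener integral at no specified rate) about the thickness of $\Omega^c$, and it implies no quantitative Carleson packing for the scales at which $\Omega$ has no interior access near $E$. Removing porosity is exactly the regime in which \emph{all} scales at a point may lack corkscrews (think of $\Omega$ reaching $E$ only through arbitrarily thin tentacles, while $\Omega^c$ is fat and every boundary point is Wiener regular), so the sparseness you need can simply fail, and nothing in your outline replaces it.

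The paper's proof avoids interior access altogether, and this is the one idea your proposal is missing. One evaluates a smoothly truncated Riesz transform $\wt\RR_r\omega^p$ at the (boundary) center of a slightly shrunken ball $\wt B_Q$ inside a good doubling cube, and integrates by parts against the Green function: the terms involving $\nabla_y G(y,p)$ over the annulus are reduced, via Caccioppoli (using that $G(\cdot,p)$, extended by zero, lies in $W^{1,2}$ away from the pole thanks to Wiener regularity), to $L^2$ averages of $|G(\cdot,p)|$ itself over $B(x,3r)$. Then Lemma~\ref{l:w>G} --- a maximum-principle comparison valid for \emph{all} $y\in B\cap\Omega$, however close to the boundary --- combined with Lemma~\ref{lembourgain} applied to the Frostman measure $\mu$ and the good-cube conditions $\mu(Q)>\tau\,\omega^p(Q)$, $\omega^p(3B_Q)\leq T\,\omega^p(Q)$, gives the pointwise bound $|G(y,p)|\lesssim r$ on that ball. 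Thus no corkscrew, no CFMS comparison, and no packing of ``accessless'' scales is ever needed; the bad cubes are defined by low $\mu$-mass or high density exactly as in \cite{AMT}, and are handled by the measure-theoretic argument there. To repair your proposal you would have to either prove the quantitative sparseness claim (which appears false in general) or replace the pointwise $\nabla G$ strategy by an integration-by-parts/Caccioppoli argument of this kind.
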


The proof of this lemma will follow the same lines as the proof of Theorem 1 in \cite{AMT},
where a version of our Theorem \ref{teo1} is
obtained under the additional hypothesis of ``porosity", i.e., assuming the existence of a corkscrew
point in some component of $\mathbb{R}^{n+1}\setminus \partial\Omega$, at all scales with a uniform constant.
To remove the porosity assumption, only the Key Lemma 7.1 from \cite{AMT} needs to be modified.
However, for the reader's convenience we summarize the main ingredients from the arguments of \cite{AMT}.

\subsection{Relationship between harmonic measure and the the Green function}\label{sect-Green}

In what follows, and unless otherwise stated, $\Omega\subset\R^{n+1}$, $n\ge 2$ is a bounded  open  connected set
such that $\Omega$ is Wiener regular. We write $\omega^x$ to denote harmonic measure for $\Omega$ with pole at $x\in\Omega$. Analogously, $G$ will denote the Green function for $\Omega$ which is defined as follows. Write
$\mathcal{E}(x)=c_n\,|x|^{1-n}$ for the fundamental solution for Laplace's equation in $\R^{n+1}, n\geq 2$.
We define the Green function
\begin{equation}\label{green}
G(x,y) = \mathcal{E}(x-y) - \int_{\partial\Omega} \mathcal{E}(x-z)d\omega^y(z)=: \mathcal{E}(x-y) -v_x(y),
\end{equation}
which then satisfies \cite[Definition 4.2.3]{Hel}.
We now claim that $G$ belongs to $W^{1,2}_0(\Omega)$ away from the pole.
Indeed, by the Wiener regularity of   $\partial \Omega$,
and the fact that the data $\mathcal{E}(x-\cdot)\big|_{\partial\Omega}$ is Lipschitz for fixed $x\in \Omega$,
the solution $v_x(y)$ defined above
coincides with the Lax-Milgram solution constructed in the standard way as follows
(see, e.g., \cite[p. 5]{Ke}).  Set
$F_x(y):=\mathcal{E}(x-y)\,(1-\psi((x-y)/\delta(x)))\,\psi(y/R)$ where $\psi\in \CC_c^\infty(\mathbb{R}^{n+1})$ is radial, $0\le \psi\le 1$, ${\rm \supp\,} \psi\subset B(0,1/2)$ and $\psi\equiv 1$ on $B(0,1/4)$
and where $R$ is large enough so that $\Omega\subset B(0,R/8)$.  Clearly, $F_x\in \CC_c^\infty(\mathbb{R}^{n+1})$,
and $F_x\big|_{\partial\Omega}=\mathcal{E}(x-\cdot)$.  Thus, by Lax-Milgram we may
construct $u_x\in W^{1,2}_0(\Omega)$ such that $\mathcal{L} u_x= \mathcal{L}F_x\in W^{-1,2}(\Omega)$,
where $\mathcal{L}$ is the Laplacian.  Then $v_x = F_x - u_x,$  and therefore
$$
G(x,y)
=
\mathcal{E}(x-y)-v_x(y)
=
\mathcal{E}(x-y)-F_x(y)+u_x
$$
Since $\mathcal{E}(x-\cdot)$ and $F_x(\cdot)$ agree in $\Omega\setminus B(x,\delta(x)/2) $,
it follows that $\mathcal{E}(x-\cdot)- F_x(\cdot)$ is in  $W_0^{1,2}(\Omega)$ away from $x$.
Moreover, by construction $u_x\in W^{1,2}_0(\Omega)$, so
that $G(x,\cdot)\in W^{1,2}_0(\Omega)$ away from $x$ as desired.

The following auxiliary result is  somewhat similar to \cite[Lemma 4.2 ]{AMT}. The main difference is that $y$ is required to be a corkscrew point $x_B$ relative to $B$ and also that one can replace the infimum in
\rf{eq:Green-lowerbound} just by $\omega^{x_B}(B)$.

\begin{lemma}\label{l:w>G}
%Let $\Omega\subsetneq \R^{n+1}$ be an open Greenian domain\footnote{That is, a domain for which there exists a Green function in the sense of \cite[Definition 4.2.3]{Hel}.
%By \cite[Lemma 4.2.10]{Hel}, every open set $\Omega\subset \R^{n+1}$ is Greenian, if $n+1 \geq 3$, and every bounded open set is Greenian in 2 dimensions.}
Let $n\ge 2$ and $\Omega\subset\R^{n+1}$ be a bounded open connected set which  is Wiener regular.
Let $B=B(x,r)$ be a closed ball with $x\in\pom$ and $0<r<\diam(\pom)$. Then, for all $a>0$,
\begin{equation}\label{eq:Green-lowerbound}
 \omega_{\Omega}^{x}(aB)\gtrsim \inf_{z\in 2B\cap \Omega} \omega_\Omega^{z}(aB)\, r^{n-1}\, G(x,y)\quad\mbox{
 for all $x\in \Omega\backslash  2B$ and $y\in B\cap\Omega$,}
 \end{equation}
 with the implicit constant independent of $a$.
%In the case $n=1$,
%\begin{equation}\label{eq:Green-lowerbound2}
% \omega_{\Omega}^{x}(B)\gtrsim \omega_{\Omega}^{x_{B}}(B)\, \bigl|G_{\Omega}(x,x_{B})-
% G_{\Omega}(x,z)\bigr|\quad \mbox{for all $x\in\Omega\setminus B_0$ and $z\in \frac12B_0$.}
% \end{equation}
\end{lemma}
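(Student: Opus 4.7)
The plan is to deduce \eqref{eq:Green-lowerbound} from a single application of the maximum principle in the subdomain $\Omega' := \Omega \setminus \overline{2B}$. Fix $y \in B \cap \Omega$, write $m := \inf_{z \in 2B \cap \Omega} \omega_\Omega^z(aB)$, and set
$$h(z) := \omega_\Omega^z(aB) - c\, m\, r^{n-1}\, G(z,y),$$
where $c > 0$ is a dimensional constant to be fixed below. Since $y \in B \subset \overline{2B}$, the singularity of $G(\cdot, y)$ lies outside $\Omega'$, so $h$ is harmonic and bounded in $\Omega'$: we have $0 \le \omega_\Omega^z(aB) \le 1$ trivially, and $G(z,y) \leq \mathcal{E}(z-y) = c_n |z-y|^{1-n} \leq c_n r^{1-n}$ on $\Omega'$, using that $|z-y| \geq r$ whenever $z \notin 2B$ and $y \in B$, together with the nonnegativity of the second term in \eqref{green}.

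I then verify that $h \geq 0$ at every point of $\partial \Omega' \subset (\partial(2B) \cap \overline{\Omega}) \cup \partial \Omega$. On $\partial(2B) \cap \Omega$, the definition of $m$ gives $\omega_\Omega^z(aB) \geq m$, while the bound on $G$ yields $c\, m\, r^{n-1} G(z,y) \leq c\, c_n\, m$; selecting $c := 1/(2 c_n)$ produces $h(z) \geq m/2 \geq 0$. On $\partial \Omega \setminus \overline{2B}$, the Wiener regularity of $\Omega$ guarantees that $G(z,y) \to 0$ as $z$ approaches any $\zeta \in \partial \Omega$, while $\omega_\Omega^z(aB) \geq 0$ trivially, hence $\liminf_{z \to \zeta} h(z) \geq 0$. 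Note also that Wiener regularity passes from $\Omega$ to $\Omega'$ because $(\Omega')^c \supset \Omega^c$, and the points of $\partial(2B) \cap \Omega$ are automatically regular for $\Omega'$ as they lie on a smooth hypersurface.

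With $h$ bounded, harmonic in $\Omega'$, and $\liminf_{z \to \zeta} h(z) \geq 0$ at every boundary point $\zeta$ of $\Omega'$, the maximum principle yields $h \geq 0$ throughout $\Omega'$. Evaluating at an arbitrary pole $x \in \Omega \setminus 2B$ gives
$$\omega_\Omega^x(aB) \;\geq\; \tfrac{1}{2 c_n}\, m\, r^{n-1}\, G(x,y),$$
which is \eqref{eq:Green-lowerbound} with a constant independent of $a$. I do not expect a serious obstacle: the argument rests only on the pointwise estimate $G \leq \mathcal{E}$, the confinement of the singularity of $G(\cdot,y)$ to $2B$ (which is why $y \in B$ appears in the hypothesis), and the passage of Wiener regularity from $\Omega$ to $\Omega'$. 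The only delicate point is justifying the boundary limit of $h$ at possibly irregular-looking portions of $\partial \Omega$, which is precisely what the Wiener regularity hypothesis on $\Omega$ is designed to handle.
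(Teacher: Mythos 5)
Your argument is correct and essentially reproduces the paper's proof: both compare $\omega_\Omega^\cdot(aB)$ with $m\,r^{n-1}\,G(\cdot,y)$ on $\partial(2B)\cap\Omega$ via the pointwise bound $G(z,y)\leq\mathcal{E}(z-y)\lesssim r^{1-n}$, and both conclude by the maximum principle in $\Omega\setminus 2B$, using Wiener regularity to control the boundary behavior of $G$ at $\partial\Omega$. The only (minor) technical difference lies in how the maximum principle is justified -- you invoke a $\liminf$ version for the single bounded harmonic function $h$, whereas the paper approximates $\omega^z(aB)$ by $v_\epsilon(z)=\int\psi_\epsilon\,d\omega^z$ with smooth $\psi_\epsilon$ so that both comparison functions lie in $C(\overline{\Omega\setminus 2B})$ before applying the classical maximum principle -- but both routes are rigorous.
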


\begin{proof}
Fix $y\in B\cap\Omega$ and note that for every $x\in\partial (2B)\cap\Omega$
we have
\begin{equation}\label{eqop1}
G(x,y)\lesssim \frac1{|x-y|^{n-1}}\leq \frac c{r^{n-1}} \leq \frac{c\,\omega^x(aB)}{r^{n-1}\,\inf_{z\in 2B\cap \Omega} \omega^{z}(aB)}
.
\end{equation}
Let us observe that the two functions
$$
u(x)=c^{-1}\,G(x, y)\,r^{n-1}\,\inf_{z\in 2B\cap \Omega} \omega^{z}(aB)\qquad\text{ and }\qquad v(x)=\omega^x(aB)
$$
are harmonic in $\Omega\setminus 2B$ and \eqref{eqop1} says that $u\le v$ in $\partial (2B)\cap\Omega$.
 We would like to use maximum principle in the domain $\Omega\setminus 2B$, but in order to rigorously justify that use we need to approximate $v$. Let $\psi_\epsilon\in \CC^\infty_c(\re^{n+1})$ be a radial function, $0\le \psi_\epsilon\le 1$, $\psi_\epsilon$ supported in $(a+\epsilon)B$ and $\psi_\epsilon\equiv 1$ in $a\,B$. Note then that
$$
v(x)=
\int_{\pom} \chi_{aB}\,d\omega^x
\le
\int_{\pom} \psi_\epsilon\,d\omega^x
=:
v_\epsilon(x).
$$
Since $\psi_\epsilon$ is smooth and all boundary points are Wiener regular we can conclude that $v_\epsilon\in
\CC (\overline{\Omega})$. Hence $u$, $v_\epsilon\in\CC(\overline{\Omega\setminus 2\,B})$ (that $u$ is continuous away from the pole follows again from \eqref{green} and the Wiener regularity). We now claim that $u\le v_\epsilon$ on $\partial(\Omega\setminus 2B)$. Indeed, from what we showed before $u\le v\le v_\epsilon$ on $\partial (2B)\cap\Omega$ and also
$u(x)=0$ in $\partial\Omega$ and $v_\epsilon(x)\ge 0$ for every $x\in \overline{\Omega}$. Hence the maximal principle
for continuous solutions all the way to the boundary yields that $u\le v_\epsilon$ on $\Omega\setminus 2\,B$. To conclude with our estimate we just need to observe that $v_\epsilon(x)\to v(x)$ for every $x\in \Omega\setminus 2\,B$ by dominated convergence theorem and the fact that $\psi_\epsilon(z)\to \chi_{a\,B}(z)$ for everywhere $z\in\re^{n+1}$.
\end{proof}

\vv

\subsection{The dyadic lattice of David and Mattila}\label{secdya}

We introduce now the dyadic lattice of cubes
with small boundaries of David-Mattila associated with $\omega^p$, where $p$ is a fixed pole in $\Omega$, from  \cite[Theorem 3.2]{David-Mattila}.

\begin{lemma}[David, Mattila]
\label{lemcubs}
Consider two constants $C_0>1$ and $A_0>5000\,C_0$ and denote $W=\supp\omega^p$. Then there exists a sequence of partitions of $W$ into
Borel subsets $Q$, $Q\in \DD_k$, with the following properties:
\begin{itemize}
\item For each integer $k\geq0$, $W$ is the disjoint union of the ``cubes'' $Q$, $Q\in\DD_k$, and
if $k<l$, $Q\in\DD_l$, and $R\in\DD_k$, then either $Q\cap R=\varnothing$ or else $Q\subset R$.
\vv

\item The general position of the cubes $Q$ can be described as follows. For each $k\geq0$ and each cube $Q\in\DD_k$, there is a ball $B(Q)=B(z_Q,r(Q))$ such that
$$z_Q\in W, \qquad A_0^{-k}\leq r(Q)\leq C_0\,A_0^{-k},$$
$$W\cap B(Q)\subset Q\subset W\cap 28\,B(Q)=W \cap B(z_Q,28r(Q)),$$
and
$$\mbox{the balls\, $5B(Q)$, $Q\in\DD_k$, are disjoint.}$$

\vv
\item The cubes $Q\in\DD_k$ have small boundaries. That is, for each $Q\in\DD_k$ and each
integer $l\geq0$, set
$$N_l^{ext}(Q)= \{x\in W\setminus Q:\,\dist(x,Q)< A_0^{-k-l}\},$$
$$N_l^{int}(Q)= \{x\in Q:\,\dist(x,W\setminus Q)< A_0^{-k-l}\},$$
and
$$N_l(Q)= N_l^{ext}(Q) \cup N_l^{int}(Q).$$
Then
\begin{equation}\label{eqsmb2}
\omega^p(N_l(Q))\leq (C^{-1}C_0^{-3d-1}A_0)^{-l}\,\omega^p(90B(Q)).
\end{equation}
\vv

\item Denote by $\DD_k^{db}$ the family of cubes $Q\in\DD_k$ for which
\begin{equation}\label{eqdob22}
\omega^p(100B(Q))\leq C_0\,\omega^p(B(Q)).
\end{equation}
We have that $r(Q)=A_0^{-k}$ when $Q\in\DD_k\setminus \DD_k^{db}$
and
\begin{equation}\label{eqdob23}
\omega^p(100B(Q))\leq C_0^{-l}\,\omega^p(100^{l+1}B(Q))\quad
\mbox{for all $l\geq1$ such that $100^l\leq C_0$ and $Q\in\DD_k\setminus \DD_k^{db}$.}
\end{equation}
\end{itemize}
\end{lemma}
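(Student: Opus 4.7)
The plan is to follow the top-down greedy construction of Christ, refined with the small-boundary mechanism of David and Mattila. At each generation $k\ge 0$, I first extract a maximal $A_0^{-k}$-separated subset $\{z_Q\}_{Q\in\DD_k}$ of $W:=\supp\omega^p$. The separation immediately makes the inflated balls $5B(z_Q,A_0^{-k})$ pairwise disjoint (since $A_0>5000C_0$), and maximality means that $W$ is already covered by the doubled balls $B(z_Q,2A_0^{-k})$. I would then partition $W$ into cubes by a nearest-center Voronoi rule applied in a top-down fashion: each newly chosen center at scale $k+1$ is assigned to its nearest ancestor at scale $k$, and ambiguities are resolved to be coherent with the parent partition. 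This produces the nesting of the first bullet together with the enclosure $W\cap B(z_Q,A_0^{-k})\subset Q\subset W\cap 28 B(Q)$, where $B(Q)=B(z_Q,r(Q))$ with $A_0^{-k}\le r(Q)\le C_0 A_0^{-k}$.

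The heart of the proof is the small-boundary property \rf{eqsmb2}. Here one exploits the flexibility in the range $[A_0^{-k}, C_0 A_0^{-k}]$ left free for $r(Q)$. For any candidate radius in this range one can compare $\omega^p(N_l(Q))$ to the $\omega^p$-mass of a thin shell of width $\sim A_0^{-k-l}$ about the sphere of radius $r(Q)$; since these shells at distinct radii are disjoint and sit inside $90B(Q)$, their total $\omega^p$-measure is bounded by $\omega^p(90B(Q))$. A pigeonhole / averaging argument over $\sim C_0^{3d+1}$ well-spaced candidate radii then locates one choice of $r(Q)$ for which the shell measures decay geometrically in $l$ with rate $(C^{-1}C_0^{-3d-1}A_0)^{-l}$, provided $C_0$ is taken large enough relative to the dimension. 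This $r(Q)$ is declared the official radius of $Q$, and the Voronoi patching from the previous paragraph must be carried out with these corrected radii rather than the raw separation radii.

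Finally, the doubling dichotomy is enforced by fiat: any cube that fails the doubling condition \rf{eqdob22} is assigned the minimal radius $r(Q)=A_0^{-k}$ exactly, so that one has not wasted the flexibility of the averaging step on a non-doubling cube. The iterated defect inequality \rf{eqdob23} is then obtained by applying the non-doubling hypothesis successively on the geometric chain of balls $100B(Q),100^2B(Q),\dots$, stopping at the largest $l$ with $100^l\le C_0$. The main obstacle is the compatibility between the two constraints on $r(Q)$: the small-boundary averaging wants freedom to slide the radius within a multiplicative factor, while the top-down nesting wants the radii to be coordinated across generations. Harmonizing these requirements is exactly what forces the hierarchy $A_0>5000C_0$ and the rather large exponent $3d+1$ appearing in \rf{eqsmb2}; once this is balanced, all four bulleted conclusions follow simultaneously by induction on $k$.
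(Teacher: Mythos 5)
The paper does not prove this lemma; it quotes it verbatim as \cite[Theorem~3.2]{David-Mattila}, so there is no in-paper argument to compare your sketch against. Evaluated on its own, your outline has the right broad shape (separated net at each scale, radius chosen by an averaging/pigeonhole argument on shells, top-down Voronoi-type assignment for nesting), but several of the load-bearing steps are either incorrect as stated or left unresolved.

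First, the disjointness claim is wrong: if $\{z_Q\}$ is a maximal $A_0^{-k}$-separated net, only the balls $B(z_Q, A_0^{-k}/2)$ are automatically disjoint; the balls $5B(Q)$ with $r(Q)$ allowed to run up to $C_0 A_0^{-k}$ require a separation of order $10\,C_0 A_0^{-k}$, and it is this combined with the covering constraint that forces the large gap $A_0 \gg C_0$ between scales — your invocation of $A_0>5000 C_0$ at that point does not do what you claim. Second, your handling of the doubling dichotomy is circular: you say non-doubling cubes are ``assigned $r(Q)=A_0^{-k}$ by fiat'' \emph{after} the averaging step, but the averaging argument is precisely what picks $r(Q)$, and whether $Q$ is doubling is a property of the chosen $r(Q)$. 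The actual mechanism in David--Mattila is to test the doubling condition at the minimal radius first; if it fails, one keeps $r(Q)=A_0^{-k}$ and gets the iterated defect \rf{eqdob23} by chaining the failed inequality, while if it holds there is enough mass in $90 B(Q)$ to run the shell pigeonhole and find a radius satisfying \rf{eqsmb2}. Third, the nesting step is underspecified: making a nearest-center Voronoi assignment coherent with the parent partition while simultaneously preserving the small-boundary estimate is the hard part of the construction, and your closing assertion that ``all four bulleted conclusions follow simultaneously by induction on $k$'' names the goal rather than providing the argument.
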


\vv

We use the notation $\DD=\bigcup_{k\geq0}\DD_k$. Observe that the families $\DD_k$ are only defined for $k\geq0$. So the diameter of the cubes from $\DD$ are uniformly
bounded from above. We set
$\ell(Q)= 56\,C_0\,A_0^{-k}$ and we call it the side length of $Q$. Notice that
$$\frac1{28}\,C_0^{-1}\ell(Q)\leq \diam(B(Q))\leq\ell(Q).$$
Observe that $r(Q)\sim\diam(B(Q))\sim\ell(Q)$.
Also we call $z_Q$ the center of $Q$, and the cube $Q'\in \DD_{k-1}$ such that $Q'\supset Q$ the parent of $Q$.
 We set
$B_Q=28 \,B(Q)=B(z_Q,28\,r(Q))$, so that
$$W\cap \tfrac1{28}B_Q\subset Q\subset B_Q.$$
For $Q\in\DD$, we write $J(Q)\in\N$ if $Q\in\DD_{J(Q)}$.

We denote
$\DD^{db}=\bigcup_{k\geq0}\DD_k^{db}$.
Note that, in particular, from \rf{eqdob22} it follows that
\begin{equation}\label{eqdob*}
\omega^{p}(3B_{Q})\leq \omega^p(100B(Q))\leq C_0\,\omega^p(Q)\qquad\mbox{if $Q\in\DD^{db}.$}
\end{equation}
For this reason we will call the cubes from $\DD^{db}$ doubling.

As shown in \cite[Lemma 5.28]{David-Mattila}, every cube $R\in\DD$ can be covered $\omega^p$-a.e.\
by a family of doubling cubes:
\vv

\begin{lemma}\label{lemcobdob}
Let $R\in\DD$. Suppose that the constants $A_0$ and $C_0$ in Lemma \ref{lemcubs} are
chosen suitably. Then there exists a family of
doubling cubes $\{Q_i\}_{i\in I}\subset \DD^{db}$, with
$Q_i\subset R$ for all $i$, such that their union covers $\omega^p$-almost all $R$.
\end{lemma}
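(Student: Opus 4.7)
The plan is a standard stopping-time argument in the David--Mattila lattice (this is essentially \cite[Lemma 5.28]{David-Mattila}). Fix $R\in\DD$. For each $x\in R$, let $R=R_0(x)\supset R_1(x)\supset R_2(x)\supset\cdots$ be the chain of nested descendants of $R$ containing $x$, with $R_j(x)\in\DD_{J(R)+j}$. Define the ``bad set''
\[
B \,:=\, \{x\in R : R_j(x)\notin \DD^{db}\text{ for every }j\ge 1\}.
\]
For each $x\in R\setminus B$, let $j(x)$ be the smallest index with $R_{j(x)}(x)\in\DD^{db}$; the distinct cubes $\{Q_i\}_{i\in I}:=\{R_{j(x)}(x):x\in R\setminus B\}$ form a countable, pairwise disjoint subfamily of $\DD^{db}$, are all contained in $R$, and cover $R\setminus B$. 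It therefore suffices to show $\omega^p(B)=0$.

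The core ingredient is a one-step decay estimate along non-doubling chains. Let $L$ be the largest integer with $100^L\le C_0$. If $Q\in\DD_k$ and $\operatorname{par}(Q)\in\DD_{k-1}$ are both non-doubling, then $r(Q)=A_0^{-k}$, $r(\operatorname{par}(Q))=A_0^{-k+1}$ and $|z_Q-z_{\operatorname{par}(Q)}|\le 28\,r(\operatorname{par}(Q))$. A short triangle-inequality computation, using $A_0>5000\,C_0$ (which forces $100^{L+1}\le 100\,C_0\le 72\,A_0$), then gives the nesting
\[
100^{L+1}\,B(Q)\;\subset\;100\,B(\operatorname{par}(Q)).
\]
Combining this with the non-doubling estimate \eqref{eqdob23} yields
\[
\omega^p\bigl(100\,B(Q)\bigr)\;\le\;C_0^{-L}\,\omega^p\bigl(100^{L+1}B(Q)\bigr)\;\le\;C_0^{-L}\,\omega^p\bigl(100\,B(\operatorname{par}(Q))\bigr).
\]

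To conclude, for $j\ge 1$ let $\mathcal U_j$ denote the family of cubes $Q\in\DD_{J(R)+j}$ with $Q\subset R$ such that $Q,\operatorname{par}(Q),\ldots,\operatorname{par}^{\,j-1}(Q)$ are all non-doubling, and set $V_j:=\sum_{Q\in\mathcal U_j}\omega^p(100\,B(Q))$. The sets $\bigcup_{Q\in\mathcal U_j}Q$ are decreasing in $j$ and contain $B$, so $\omega^p(B)\le\lim_j\sum_{Q\in\mathcal U_j}\omega^p(Q)\le\lim_j V_j$. Applying the one-step decay to each $Q\in\mathcal U_j$ with $j\ge 2$ and grouping by parent---noting that each cube has at most $C(n)(C_0A_0)^{n+1}$ children, by the disjointness of the balls $\{5B(Q):Q\in\DD_{J(R)+j}\}$---yields
\[
V_j\;\le\;C_0^{-L}\,C(n)(C_0A_0)^{n+1}\,V_{j-1}\;=:\;\theta\,V_{j-1}.
\]
Since $L\gtrsim\log C_0$ while the combinatorial factor is only polynomial in $C_0A_0$, one can choose $C_0$ large (and then $A_0>5000\,C_0$ as in Lemma~\ref{lemcubs}) so that $\theta<1$; iterating then gives $V_j\to 0$ and hence $\omega^p(B)=0$. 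The main subtlety, and the reason for the clause ``$A_0$ and $C_0$ chosen suitably'' in the statement, is precisely this calibration: the gain $C_0^{-L}$ from the non-doubling decay must be made to dominate the combinatorial branching of the lattice.
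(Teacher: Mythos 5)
Your argument is correct, and it is worth pointing out that the paper itself contains no proof of this statement: Lemma \ref{lemcobdob} is imported directly from \cite[Lemma 5.28]{David-Mattila}, so what you have written is in effect a reconstruction of the standard David--Mattila stopping-time argument rather than an alternative to anything in the text. The individual steps check out: the inclusion $100^{L+1}B(Q)\subset 100\,B(\operatorname{par}(Q))$ does follow from $A_0>5000\,C_0$ together with $r(Q)=A_0^{-k}$ for non-doubling cubes, the branching bound $\lesssim C(n)(C_0A_0)^{n+1}$ follows from the disjointness of the balls $5B(Q)$, and the first doubling descendants are contained in $R$ and cover $R\setminus B$, so everything reduces to $\omega^p(B)=0$ as you say. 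The only point worth flagging is your calibration $\theta<1$: the gain $C_0^{-L}$ with $L$ the largest integer satisfying $100^L\le C_0$ is vacuous unless $C_0\ge 100$, and it dominates the branching factor only if $A_0$ is taken at most polynomial in $C_0$ (e.g.\ $A_0\approx 5000\,C_0$); this extra coupling between the constants is legitimate under the hypothesis that $A_0$ and $C_0$ are ``chosen suitably'', but it can be avoided. Indeed, running the same summation with the density-drop estimate of Lemma \ref{lemcad23} (i.e.\ \cite[Lemma 5.31]{David-Mattila}), applied to the chain joining a cube at depth $\Delta$ below $R$ to $R$ itself, gives a decay of order $A_0^{-9n}$ per generation in density, hence roughly $A_0^{-10n}$ per generation in the quantities $\omega^p(100B(Q))$, which beats the branching $\lesssim (C_0A_0)^{n+1}\le A_0^{2(n+1)}$ for every admissible pair simply because $A_0>5000\,C_0$ and $10n>2(n+1)$ for $n\ge 1$; this yields $\omega^p(B)=0$ without asking anything more of the constants than Lemma \ref{lemcubs} already does.
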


Given a ball $B\subset \R^{n+1}$, we consider its $n$-dimensional density:
$$\Theta_\omega(B)= \frac{\omega^p(B)}{r(B)^n}.$$
%We will also write $\Theta_\omega^p(x,r)$ instead of $\Theta_\omega^p(B(x,r))$.

The following is an easy consequence of \cite[Lemma 5.31]{David-Mattila}. For the precise details, see
\cite[Lemma 4.4]{Tolsa-memo}, for example.

\vv
\begin{lemma}\label{lemcad23}
Let $R\in\DD$ and let $Q\subset R$ be a cube such that all the intermediate cubes $S$,
$Q\subsetneq S\subsetneq R$ are non-doubling (i.e.\ belong to $\DD\setminus \DD^{db}$).
Then
$$\Theta_\omega(100B(Q))\leq C_0\,A_0^{-9n(J(Q)-J(R)-1)}\,\Theta_\omega(100B(R))$$
and
$$\sum_{S\in\DD:Q\subset S\subset R}\Theta_\omega(100B(S))\leq c\,\Theta_\omega(100B(R)),$$
with $c$ depending on $C_0$ and $A_0$.
\end{lemma}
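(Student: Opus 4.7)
The plan is to telescope the non-doubling inequality \rf{eqdob23} along the chain of ancestors from $Q$ up to $R$, extracting a geometric gain at each intermediate non-doubling cube, and then to convert the resulting $\omega^p$-estimate into the required density estimate by exploiting the sharp identity $r(S)=A_0^{-J(S)}$ available for non-doubling cubes.

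First I would label the chain $Q=S_k\subsetneq S_{k-1}\subsetneq\dots\subsetneq S_0=R$ with $k:=J(Q)-J(R)$ and $S_j\in\DD_{J(R)+j}$. The hypothesis forces $S_j$ non-doubling for $1\le j\le k-1$, so $r(S_j)=A_0^{-(J(R)+j)}$ exactly, and using $z_{S_{j+1}}\in 28\,B(S_j)$ together with $A_0>5000\,C_0$ a short direct computation shows the containment $100^{l_*+1}\,B(S_j)\subset 100\,B(S_{j-1})$ for every $j$, where $l_*:=\lfloor\log_{100}C_0\rfloor$ is the largest integer with $100^{l_*}\le C_0$; similarly $100\,B(Q)\subset 100\,B(S_{k-1})$. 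Applying \rf{eqdob23} to each non-doubling $S_j$ with $l=l_*$ gives
\[
\omega^p(100\,B(S_j))\le C_0^{-l_*}\,\omega^p(100^{l_*+1}\,B(S_j))\le C_0^{-l_*}\,\omega^p(100\,B(S_{j-1})),
\]
and telescoping over $j=1,\dots,k-1$ together with the inclusion at the bottom yields
\[
\omega^p(100\,B(Q))\le C_0^{-l_*(k-1)}\,\omega^p(100\,B(R)).
\]
Dividing by $(100\,r(Q))^n$ and using the crude bound $r(R)/r(Q)\le C_0\,A_0^{k}$ converts this into the first displayed estimate, provided the constants are arranged so that the per-step gain $C_0^{-l_*}$ dominates the radius-ratio loss $A_0^n$ with margin $A_0^{-9n}$ to spare.

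Finally, for the second estimate I would observe that the hypothesis on non-doubling intermediate cubes is inherited by every pair $(S,R)$ with $Q\subset S\subset R$, so the first estimate applies to each such $S$, and the resulting sum is dominated by the geometric series $\sum_{j\ge 0}A_0^{-9nj}$, which converges to a constant depending only on $A_0$, $C_0$, and $n$. \emph{The main obstacle} is the constant bookkeeping: one must verify the geometric containment $100^{l_*+1}\,B(S_j)\subset 100\,B(S_{j-1})$ and carefully combine the per-step gain $C_0^{-l_*}$ with the radius-ratio loss to produce the clean exponent $-9n$, which requires the David-Mattila regime $A_0>5000\,C_0$ with both constants sufficiently large that $l_*\log C_0$ exceeds $10n\log A_0$.
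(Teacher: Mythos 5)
The paper does not actually prove this lemma: it is cited as an easy consequence of Lemma 5.31 in David--Mattila \cite{David-Mattila}, with the reader referred to Lemma 4.4 of \cite{Tolsa-memo} for details. Your telescoping argument is precisely the route taken in those references: chain of ancestors $Q=S_k\subsetneq\dots\subsetneq S_0=R$, apply the non-doubling gain \rf{eqdob23} at each intermediate level with the maximal admissible $l=l_*$, exploit the sharp radius identity $r(S_j)=A_0^{-J(S_j)}$ for non-doubling cubes, and use the containments forced by $z_{S_j}\in 28B(S_{j-1})$ together with $A_0>5000\,C_0$ to nest the enlarged balls and then sum the geometric series.

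One piece of constant bookkeeping is not quite as you describe it. Converting the telescoped estimate $\omega^p(100B(Q))\le C_0^{-l_*(k-1)}\omega^p(100B(R))$ to densities costs a factor
$\bigl(r(R)/r(Q)\bigr)^n\le \bigl(C_0A_0^{k}\bigr)^n=(C_0A_0)^n\cdot A_0^{n(k-1)}$,
and the one-time boundary factor $(C_0A_0)^n$ cannot be traded against the per-step gain: when $J(Q)-J(R)=1$ there are no intermediate cubes and hence no gain at all, so the bound one actually obtains is $(C_0A_0)^n\,\Theta_\omega(100B(R))$ rather than $C_0\,\Theta_\omega(100B(R))$. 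The literal prefactor ``$C_0$'' in the first display should thus be understood as a constant of roughly that size (the paper's standing convention that ``$C_0$ and $A_0$ are some big fixed constants so that the results stated in the lemmas of this section hold'' is meant to absorb exactly this kind of slack). This has no downstream effect, since only the second, summed estimate is ever used and its constant $c$ is explicitly allowed to depend on $C_0$ and $A_0$. Your observation that the non-doubling hypothesis is inherited by every pair $(S,R)$ with $Q\subset S\subset R$, so that the sum is dominated by $\sum_{j\ge 0}A_0^{-9nj}$, is correct, and the per-step requirement you isolate, $C_0^{l_*}\gtrsim A_0^{10n}$, is indeed the constraint the David--Mattila parameters must be chosen to satisfy.
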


\vv
From now on we will assume that $C_0$ and $A_0$ are some big fixed constants so that the
results stated in the lemmas of this section hold.
%Further, we will choose the pole $p\in\Omega$ of the harmonic measure $\omega^p$ so that $\dist(p,\,\d \Omega) \geq 10C_0$. The existence of such point $p$ can be
%assumed by dilating $\Omega$ by a suitable factor if necessary.

\vvv
\subsection{The Frostman measure}

From now on, $\Omega$ and $E$ will be as in Proposition \ref{propreg}.
We fix a point $p\in\Omega$ and consider the harmonic measure $\omega^p$ of $\Omega$ with pole at $p$. We may assume thet $\omega^p(E)>0$, otherwise there is nothing to prove.

Let $g\in L^1(\omega^p)$ be such that
$$\omega^p|_E = g\,\HH^n|_{\partial\Omega}.$$
Given $M>0$, let
$$E_M= \{x\in\partial\Omega:M^{-1}\leq g(x)\leq M\}.$$
Take $M$ big enough so that $\omega^p(E_M)\geq \omega^p(E)/2>0$.
Consider an arbitrary compact set $F_M\subset E_M$ with $\omega^p(F_M)>0$.

Let $\mu$ be an $n$-dimensional Frostman measure for $F_M$. That is, $\mu$ is a non-zero Radon measure supported on $F_M$
such that
$$\mu(B(x,r))\leq C\,r^n\qquad \mbox{for all $x\in\R^{n+1}$.}$$
Further, by renormalizing $\mu$, we can assume that $\|\mu\|=1$. Of course the constant $C$ above will depend on
$\HH^n_\infty(F_M)$, and the same may happen for all the constants $C$ to appear,  but this causes no problems. Notice that $\mu\ll\HH^n|_{F_M}\ll \omega^p$.
In fact, for any set $H\subset F_M$,
\begin{equation}\label{Frostman}
\mu(H)\leq C\,\HH^n_\infty(H)\leq C\,\HH^n(H)\leq C\,M\,\omega^p(H).
\end{equation}

\vv

\subsection{The bad cubes}\label{secbad}

Now we recall the definition of bad cubes from \cite{AMT}.
We say that $Q\in\DD$ is bad and we write $Q\in\bad$, if $Q\in\DD$ is a maximal cube satisfying one of the conditions below:
\begin{itemize}
\item[(a)] $\mu(Q)\leq \tau\,\omega^p(Q)$, where $\tau>0$ is a small parameter to be fixed below, or
\item[(b)]  $\omega^p(3B_Q)\geq A\,r(B_Q)^n$, where $A$ is some big constant to be fixed below.
\end{itemize}
The existence maximal cubes is guarantied by the fact that all the cubes from $\DD$ have side length uniformly bounded from
above (since $\DD_k$ is defined only for $k\geq0$).
If the condition (a) holds, we write $Q\in\LM$ (little measure $\mu$) and in the case (b), $Q\in\HD$ (high density).
On the other hand, if a cube $Q\in\DD$ is not contained in any cube from $\bad$, we say that $Q$ is good and we write
$Q\in\good$.

For technical reasons one needs to introduce a variant of the family $\DD^{db}$ of doubling cubes.
Given some constant $T\geq C_0$ (where $C_0$ is the constant in Lemma \ref{lemcubs}) to be fixed below,
we say that $Q\in\wt\DD^{db}$ if
$$
\omega^p(100B(Q))\leq T\,\omega^p(Q).
$$
We also set $\wt \DD^{db}_k=\wt\DD^{db}\cap \DD_k$ for $k\geq0$.
From \rf{eqdob*} and the fact that $T\geq C_0$, it is clear that $\DD^{db}\subset \wt\DD^{db}$.

It is shown then in Lemma 6.1 of \cite{AMT} that if $\tau$ is small enough and $A$ and $T$ big enough, then
$$\omega^p\biggl(F_M \cap \bigcup_{Q\in\wt\DD_0^{db}} Q  \setminus
 \bigcup_{Q\in\bad} Q
\biggr) >0,$$
where $\wt\DD_0^{db}$ stands for the family of cubes from the zero level of $\wt\DD^{db}$.

Notice that for the points $x\in F_M\setminus \bigcup_{Q\in\bad} Q$, from the condition (b) in the definition
of bad cubes, it follows that
$$\omega^p(B(x,r))\lesssim A\,r^n\qquad \mbox{for all $0<r\leq 1$.}$$
Trivially, the same estimate holds for $r\geq1$, since $\|\omega^p\|=1$. So we have
%\chema{Is not really everywhere? This will solve a small issue in page 17}
\begin{equation}\label{eqcc0}
\M^n\omega^p(x)\lesssim A\quad \mbox{ for $\omega^p$-a.e.\ $x\in F_M\setminus \bigcup_{Q\in\bad} Q$.}
\end{equation}

\vv

\subsection{The key lemma}

Folowinf the same arguments in  \cite{AMT}, it turns out that to prove Proposition \ref{propreg} it is enough to show the following.

\begin{lemma}[Key lemma]
Let $Q\in\good$ be contained in some cube from the family $\wt{\DD}_0^{db}$. Then we have
\begin{equation}\label{eqdk0}
\bigl|\RR_{r(B_Q)}\omega^p(x)\bigr| \leq  C(\delta,A,M,T,\tau,d(p))\quad \mbox{ for all $x\in B_Q$},
\end{equation}
where, to shorten notation, we wrote $d(p)= \dist(p,\partial\Omega)$.
\end{lemma}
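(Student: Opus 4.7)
The plan is to reduce the desired bound on $\RR_{r(B_Q)}\omega^p(x)$ to a size estimate on the Green function $G(p,\cdot)$ near $B_Q$, using the classical relation between the Newtonian potential of $\omega^p$ and $G$, and then to control $G$ via Lemma \ref{l:w>G} together with the goodness of $Q$. The architecture follows that of Key Lemma 7.1 in \cite{AMT}; the novelty is that one cannot appeal to a corkscrew point of $\Omega$ at scale $r:=r(B_Q)$, so the step passing from $G$ to $\nabla G$ must be reworked.

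I would first dispose of the annular contribution. The integral $\int_{r<|x-y|\le Kr}|x-y|^{-n}d\omega^p(y)$ is bounded by $\omega^p(KB_Q)/r^n$, and since $Q\notin\HD$ gives $\omega^p(3B_Q)\le A\,r^n$, the doubling of the ancestor $R\in\wt\DD_0^{db}$ together with the small-boundary property of Lemma \ref{lemcubs} improves this to $\omega^p(KB_Q)\lesssim_{K,T} A\,r^n$, so the annular piece is $O_K(A)$. At the price of another $O(A)$-error, the hard truncation may be replaced by a smooth truncation with kernel $K_r(w)=|w|^{-n-1}w\,\phi(|w|/r)$; denote the resulting transform by $\wt\RR_r\omega^p$.

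Next, writing $K_r = c_n^{-1}\nabla_w[\mathcal{E}(w)\phi(|w|/r)]$ modulo a harmless bump term, and invoking the identity $\int\mathcal{E}(z-y)d\omega^p(y)=\mathcal{E}(z-p)-G(z,p)$ for $z\in\Omega$ (Section \ref{sect-Green}), I obtain the pointwise representation
\[
\wt\RR_r\omega^p(z)\;=\;c_n^{-1}\nabla_z\mathcal{E}(z-p)\;-\;c_n^{-1}\nabla_z G(z,p)\;+\;O(A),\qquad z\in\Omega.
\]
The $\nabla\mathcal{E}(z-p)$ term is $O(d(p)^{-n})$ unless $B_Q$ is unnaturally close to $p$, in which case a separate easy argument applies. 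The remaining work is then to bound $\nabla_z G(z,p)$ and extend that bound to $z=x\in\pom$. For the pointwise control of $G$ itself, Lemma \ref{l:w>G} with pole $p$ and ball comparable to $B_Q$ gives $G(p,y)\lesssim\omega^p(aB_Q)/\bigl[r^{n-1}\inf_{z\in 2B_Q\cap\Omega}\omega^z(aB_Q)\bigr]$ for $y\in 3B_Q\cap\Omega$; the numerator is $\lesssim A\,r^n$ from the first step, while the denominator is bounded below by Bourgain's Lemma \ref{lembourgain} combined with the chain $\HH^n_\infty(\pom\cap\delta B_Q)\gtrsim\mu(Q)\ge\tau\omega^p(Q)\gtrsim_T 1$ (using the Frostman normalization, $Q\notin\LM$, and the doubling of the level-zero ancestor $R$ respectively). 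This yields $G(p,y)\lesssim_{A,\tau,T,M}\,r$.

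The main obstacle is the final synthesis: passing from the pointwise size control of $G$ to a pointwise bound on $\wt\RR_r\omega^p(x)$ at $x\in\pom$, without access to a corkscrew point of $\Omega$ at scale $r$ near $x$. In \cite{AMT} this step is carried out through the interior derivative estimate $|\nabla G|\lesssim G/r$ at such a corkscrew, and the absence of porosity here precludes its direct application. The way around this is to replace the pointwise evaluation of $\nabla G(\cdot,p)$ by a weak/integrated formulation: pair it with smooth vector fields supported in a neighborhood of $x$ and integrate by parts, using $G(p,\cdot)\in W^{1,2}_0(\Omega)$ (valid by Wiener regularity, Section \ref{sect-Green}), so that $\nabla G$ is absorbed onto the test fields and only $G$ appears in the final bound. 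Making this rigorous so as to deliver a genuine pointwise bound on $\wt\RR_r\omega^p(x)$ is the one place where the proof genuinely departs from \cite{AMT}; with this ingredient in hand, combining with the preceding steps gives $|\RR_{r(B_Q)}\omega^p(x)|\le C(\delta,A,M,T,\tau,d(p))$.
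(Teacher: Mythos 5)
Your overall plan (integrate $\nabla G$ by parts against smooth test fields, exploit $G(p,\cdot)\in W^{1,2}_0(\Omega)$ to end up with bounds on $G$ only, then control $G$ via Lemma~\ref{l:w>G} and Bourgain's Lemma~\ref{lembourgain}) is the one the paper follows, and you correctly flag the integration-by-parts step as the true departure from \cite{AMT}. But the step bounding $G$ contains a genuine error. The chain $\mu(Q)\ge\tau\,\omega^p(Q)\gtrsim_T 1$ is false: $\omega^p(Q)$ can be arbitrarily small for $Q$ deep inside its level-zero ancestor $R$, and doubling of $R$ gives no lower bound on $\omega^p(Q)$ below scale zero. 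Consequently, after Bourgain and Frostman the denominator $\inf_{z}\omega^z(aB_Q)$ in Lemma~\ref{l:w>G} is only $\gtrsim\mu(Q)/r^n$, which is not bounded below by a constant, and your numerator bound $\omega^p(aB_Q)\lesssim A\,r^n$ (from $Q\notin\HD$) discards exactly the information you would need to compensate. The correct route, as in the paper, is to first reduce to $Q\in\good\cap\wt\DD^{db}$ (so that $Q$ \emph{itself}, not just $R$, is doubling, $\omega^p(3B_Q)\le T\,\omega^p(Q)$), and then estimate the numerator by $\omega^p(aB_Q)\lesssim\omega^p(3B_Q)\le T\,\omega^p(Q)$ rather than by $A\,r^n$; together with $\mu(Q)>\tau\,\omega^p(Q)$ this gives $\frac1r|G(p,y)|\lesssim T/\tau$ with $\omega^p(Q)$ cancelling. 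A secondary, fixable gap: Bourgain requires $\mu$-mass inside a ball of radius $\sim\delta\,r(B_Q)$, but $Q\not\subset\delta B_Q$; the paper chooses a sub-ball $\wt B_Q$ of radius $\tfrac{\delta}{10}r(B_Q)$ with $\mu(\wt B_Q)\gtrsim\mu(B_Q)$ and evaluates $\wt\RR$ at its center, at the cost of a Calder\'on--Zygmund error controlled by $\M^n_{r(\wt B_Q)}\omega^p\lesssim_{\delta,A}1$.

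On the integration by parts itself: the pointwise interior identity $\wt\RR_r\omega^p(z)=-c_n^{-1}\nabla_z G(z,p)+\text{errors}$ that you propose does not pass to $z=x\in\pom$ (the near-field error is not uniformly bounded as $z\to\pom$), and the paper does not use it. Instead it works directly from the weak identity
$\wt\RR_r\omega^p(x)=-\int_\Omega\nabla_y G(y,p)\cdot\nabla_y[K(x-y)\vphi_r(x-y)]\,dm(y)+K(x-p)\vphi_r(x-p)$
at $x\in\pom$, splits into $I$, $II$, $III$, isolates the pole $p$ inside a small ball in $I$, uses the divergence theorem and the harmonicity of $K(x-\cdot)$ to kill the $\nabla K\,\vphi_r$ contribution away from annuli, and invokes Caccioppoli (after extending $G$ by zero) to convert the surviving $\nabla G$ integrals into $\frac1r\bigl(\avint_{B(x,3r)}|G|^2\bigr)^{1/2}$. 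That execution is the actual content of the lemma; the sketch you give identifies the right mechanism but does not yet supply the proof.
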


The proof of this lemma in \cite{AMT} uses the porosity of $\partial\Omega$ in $E$. The proof below
does not, and instead uses some arguments of integration by parts which are not present in the analogous arguments from \cite{AMT}.

\begin{proof}We may assume that $r(B_Q)\ll d(p)=\dist(p,\partial\Omega)$, since otherwise \eqref{eqdk0} is trivial.
Further, by the same techniques as the ones from the proof of the Key Lemma 7.1 from \cite{AMT},
it is enough to show \rf{eqdk0} just for the cubes $Q\in\good\cap \wt{\DD}^{db}$.
Recall that, by definition, a cube
 $Q\in\wt\DD^{db}\cap\good$ satisfies in particular
 \begin{equation}\label{eqcond**}
\mu(Q)> \tau\,\omega^p(Q)\quad \mbox{and}\quad \omega^p(3B_Q)\leq T\,\omega^p(Q).
\end{equation}

Let $\vphi:\R^d\to[0,1]$ be a radial $\CC^\infty$ function  which vanishes on $B(0,1)$ and equal $1$ on $\R^d\setminus B(0,2)$,
and for $\ve>0$ and $z\in\R^{n+1}$ denote
$\vphi_\ve(z) = \vphi\left(\frac{z}\ve\right) $
and set
$$\wt\RR_\ve\omega^p(z) =\int K(z-y)\,\vphi_\ve(z-y)\,d\omega^p(y),$$
where $K(\cdot)$ is the kernel of the $n$-dimensional Riesz transform.

Let $\delta>0$ be the constant appearing in Lemma \ref{lembourgain} about Bourgain's estimate. Consider a ball $\wt B_Q$ centered at some point from $B_Q\cap\partial\Omega$ with $r(\wt B_Q)= \frac{\delta}{10} \,r(B_Q)$ such $\mu(\wt B_Q)\gtrsim \mu(B_Q)$, with the implicit constant depending
on $\delta$.
Note that, for every $x,z\in B_Q$, by standard Calderón-Zygmund estimates
$$\bigl|\wt\RR_{r(\wt B_Q)}\omega^p(x)- \RR_{r(B_Q)}\omega^p(z)\bigr|\leq C(\delta)\,\M^n_{r(\wt B_Q)}\omega^p(z),$$
and
$$\M^n_{r(\wt B_Q)}\omega^p(z)\leq C(\delta,A)\qquad \mbox{for all $z\in B_Q$,}$$
 since $Q$ being good implies that
$Q$ and all its ancestors are not from $\HD$.
Thus,
to prove \rf{eqdk0} it suffices to show that
\begin{equation}\label{eqdk00}
\bigl|\wt\RR_{r(\wt B_Q)}\omega^p(x)\bigr| \leq  C(\delta,A,M,T,\tau,d_p) \quad\mbox{ for the center $x$ of $\wt B_Q$,}
\end{equation}

To shorten notation, in the rest of the proof we will write $r=r(\wt B_Q)$, so that $\wt B_Q=B(x,r)$.
Recall that at the beginning of Section \ref{sect-Green} we show that under the current assumptions $G(p,\cdot)$ is in $W_0^{1,2}(\Omega)$ away from $p$.
 To prove \rf{eqdk00} we may therefore formally integrate by parts (see \cite{HM} for a justification of this):
\begin{align}\label{eqdef123}
&\wt\RR_r\omega^p(x)  =\int K(x-y)\,\vphi_r(x-y)\,d\omega^p(y) \\
&\qquad = -\int_\Omega \nabla_y G(y,p)\cdot \nabla_y\bigl[K(x-y) \,\vphi_r(x-y)\bigr]\,dm(y) + K(x-p)\,\vphi_r(x-p)\nonumber \\
&\qquad = -\int_\Omega \nabla_y G(y,p)\cdot \bigl[\nabla_y K(x-y)\,\vphi_r(x-y)\bigr]\,dm(y)\nonumber \\
&\qquad\qquad\ \ -
\int_\Omega \nabla_y G(y,p)\cdot \bigl[K(x-y)\,\nabla_y\vphi_r(x-y)\bigr]\,dm(y) +  K(x-p)\,\vphi_r(x-p)\nonumber \\
&\qquad=: -I - II + III.\nonumber
\end{align}

We will estimate the terms $I$, $II$, and $III$ separately. Notice first that
$$|III|\leq \frac1{|x-p|^n}\leq \frac1{d(p)^n}.$$
Concerning $II$, since $\supp\vphi_r(x-\cdot)\subset A(x,r,2r):=B(x,2r)\setminus B(x,r)$ and $\|\nabla \vphi_r\|_\infty\leq c/r$, we have
\begin{align*}
|II|&\leq\frac cr\int_{\Omega\cap A(x,r,2r)} |\nabla_y G(y,p)|\,|K(x-y)|\,dm(y)\\
&\leq c\,\,\frac1{r^{n+1}}\int_{\Omega\cap B(x,2r)} |\nabla_y G(y,p)|\,dm(y)
\\
&\leq c\,\,\left(\frac1{r^{n+1}}\int_{\Omega\cap B(x,2r)} |\nabla_y G(y,p)|^2\,dm(y)\right)^{1/2}
.
\end{align*}
Extend  $G(\cdot,p)$ by $0$  utside of $\overline{\Omega}$ and, abusing the notation, call this extension $G(\cdot,p)$.  Observe that $G(\cdot,p)$ is in $W^{1,2}(\re^{n+1})$ away from $p$ and has compact support, since we showed before that $G(\cdot,p)$ is in $W^{1,2}_0(\Omega)$ away from $p$. Note also that $G(\cdot,p)$ is subharmonic in $B(x,4\,r)$ since $r\ll d(p)$ and hece we can invoke Caccioppoli's inequality to conclude that
\begin{align*}
|II|\leq  c\left(\avint_{B(x,2r)} |\nabla_y G(y,p)|^2\,dm(y)\right)^{1/2}
\leq \frac c{r}\left(\avint_{B(x,3r)} |G(y,p)|^2\,dm(y)\right)^{1/2}.
\end{align*}

 To deal with the term $I$, we consider a small ball $B$ centered at $p$ with radius much smaller that $d(p)$ and we split the
domain of integration as $\Omega =  (\Omega\setminus B)\cup B$:
$$ I = \biggl(\int_{\Omega\setminus B}+ \int_B\biggr)\nabla_y G(y,p)\cdot \bigl[\nabla_y K(x-y)\,\vphi_r(x-y)\bigr]\,dm(y)=:
I_a + I_b.$$
The integral $I_b$ is easy to estimate. We just use that, for $y\in B$,
\begin{equation}\label{eq110}
|\nabla_yG(y,p)|\leq \frac c{|y-p|^{n}}\quad\mbox{ and }\quad  \bigl|\nabla_y K(x-y)\,\vphi_r(x-y)\bigr|\leq
\frac{c}{|x-y|^{n+1}}\leq \frac{c}{d(p)^{n+1}}.
\end{equation}
So we have
$$|I_b|\leq \frac{c}{d(p)^{n+1}} \int_B \frac1{|y-p|^{n}}\,dm(y)\leq c\,\frac{r(B)}{d(p)^{n+1}}\leq \frac{c}{d(p)^{n}}.$$

To estimate the integral $I_a$ we use the previous extension og $G$ and apply the divergence theorem:
\begin{align*} I_a &=
\int_{\re^{n+1}\setminus B} {\rm div}\bigl(G(\cdot,p)\, \bigl[\nabla K(x-\cdot)\,\vphi_r(x-\cdot)\bigr]\bigl)(y)\,dm(y) \\
&\qquad\quad-\int_{\re^{n+1}\setminus B} G(y,p)\, {\rm div}\bigl[\nabla K(x-\cdot)\,\vphi_r(x-\cdot)\bigr](y)\,dm(y)\\
& = \int_{\partial B} G(y,p)\, \bigl[\nabla_y K(x-y)\,\vphi_r(x-y)\bigr]\,\cdot N(y)\,d\sigma(y)\\
&\qquad\quad -\int_{\re^{n+1}\setminus B} G(y,p)\, {\rm div}\bigl[\nabla K(x-\cdot)\,\vphi_r(x-\cdot)\bigr](y)\,dm(y)
\\
&=: I_{a,1} + I_{a,2} ,
\end{align*}
where $N(\cdot)$ stands for the unit normal vector on $\partial B$ pointing to the interior of $B$ and $\sigma$  is the
surface measure on $\partial B$. Note that for  the second identity we have used the fact that the Green function belongs
to $W^{1,2}(\re^{n+1})$ away from $p$ and that it has compact support.
Using that, for $y\in\partial B$,
$$|G(y,p)|\leq \frac c{r(B)^{n-1}}$$
and the second estimate in \rf{eq110} it follows that
$$|I_{a,1}|\leq \frac c{r(B)^{n-1}\,d(p)^{n+1}}\,\sigma(\partial B) \leq c\,\frac{r(B)}{d(p)^{n+1}}
\le
\frac{c}{d(p)^{n}}.
$$
To deal with $I_{a,2}$, observe that $K(x-\cdot)$ is harmonic away from $x$, and thus
$${\rm div}\bigl[\nabla K(x-\cdot)\,\vphi_r(x-\cdot)\bigr](y) = \nabla_y K(x-y)\cdot \nabla_y\vphi_r(x-y).$$
Therefore, since $\supp (\nabla_y\vphi_r(x-\cdot))\subset A(x,r,2r)$, we have
\begin{align*}
|I_{a,2}|&\leq \int_{\Omega\cap A(x,r,2r)} |G(y,p)|\, \bigl|\nabla_y K(x-y)\bigl|\,\bigl|\nabla_y\vphi_r(x-y)\bigr|
\,dm(y)\\ &\leq \frac{c}{r}\,\,
\avint_{B(x,2r)} |G(y,p)|\,
\,dm(y)\\
&\leq \frac{c}{r}
\left(\avint_{B(x,2r)} |G(y,p)|^2\,
\,dm(y)\right)^{1/2}.
\end{align*}

If we gather the estimates obtained for the terms $I$, $II$, and $III$, we get
$$\bigl|\wt\RR_\ve\omega^p(x)\bigr|\lesssim \frac{1}{r}
\left(\avint_{B(x,3r)} |G(y,p)|^2\,
\,dm(y)\right)^{1/2} + \frac1{d(p)^n}.$$
Thus, to conclude the proof the key lemma it is enough to show that
\begin{equation}\label{eqsuf1}
\frac1r\,| G(y,p)|\lesssim 1\qquad\mbox{for all $y\in  B(x,3r)\cap\Omega$.}
\end{equation}
To prove this, observe that by Lemma \ref{l:w>G} (with $B= B(x,3r)$, $a=2\delta^{-1}$), for all $y\in B(x,3r)\cap\Omega$ ,
we have
$$\omega^{p}(B(x,6\delta^{-1}r))\gtrsim \inf_{z\in B(x,6r)\cap \Omega} \omega^{z}(B(x,6\delta^{-1}r))\, r^{n-1}\,|G(y,p)|.$$
On the other hand, by Lemma \ref{lembourgain}, for any $z\in B(x,6r)\cap\Omega$,
$$\omega^{z}(B(x,6\delta^{-1}r))\gtrsim \frac{\mu(B(x,6r))}{r^n}\geq \frac{\mu(\wt B_Q)}{r^n}.$$
Therefore we have
$$
\omega^{p}(B(x,6\delta^{-1}r))
\gtrsim
\frac{\mu(\wt B_Q)}{r^n}\, r^{n-1}\,|G(y,p)|,
$$
and thus
$$
\frac1r\,| G(y,p)|\lesssim \frac{\omega^{p}(B(x,6\delta^{-1}r))}{\mu(\wt B_Q)}.
$$
Now, recall that by construction $\mu(\wt B_Q)\gtrsim \mu(B_Q)\geq \mu(Q)$ and
$B(x,6\delta^{-1}r)=6\delta^{-1} \wt B_Q\subset 3B_Q$, since $r(\wt B_Q)=\frac\delta{10}r(B_Q)$, and so
we have
$$
\frac1r\,| G(y,p)|\lesssim
\frac{\omega^{p}(B(x,6\delta^{-1}r))}{\mu(\wt B_Q)}\lesssim \frac{\omega^{p}(3B_Q)}{\mu(Q)}\lesssim
\frac{\omega^{p}(Q)}{\mu(Q)}\leq C,$$
by \rf{eqcond**}. So \rf{eqsuf1} is proved and the proof of the Key lemma is complete.
\end{proof}
\vvv

% **********************************************************************************************

\section{The proof of Theorem \ref{teo1} for a general domain  $\Omega  \subset \R^{n+1}$, $n\geq 2$}\label{secend}

First we need the following auxiliary result.

\begin{lemma}\label{lempot}
Let $\Omega$ be a proper domain in $\R^{n+1}$ and $p\in\Omega$.
Let $W\subset\partial\Omega$ be the set of Wiener irregular points for $\Omega$. Then there exists a finite measure $\mu$ such that
$U^\mu(x)=\infty$ for all $x\in W$ and $U_\mu(p)\leq 1$.
\end{lemma}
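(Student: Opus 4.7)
The strategy is to combine Kellogg's theorem (the set of Wiener irregular points is polar) with Evans' theorem on polar sets (every compact polar set carries a finite measure whose Newtonian potential is identically $+\infty$ on it), and then to glue together countably many such Evans measures into a single one with the required normalization at $p$.

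First, I would invoke Kellogg's theorem to conclude that $W\subset \partial\Omega$ is a Borel polar set, i.e.\ $\CCap(K)=0$ for every compact $K\subset W$. Since $p\in\Omega$ is interior, $d_0:=\dist(p,\partial\Omega)>0$, and this gives the elementary upper bound
\[
U^\nu(p)\le \frac{\|\nu\|}{d_0^{n-1}}
\]
for any positive measure $\nu$ supported on $\partial\Omega$, which is what will make the potential at $p$ finite.

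Next I would reduce to the compact case. Writing $W=\bigcup_j (W\cap\partial\Omega\cap\overline{B(0,j)})$ and invoking Choquet capacitability for the Borel (hence analytic) set $W$, one can produce an increasing sequence of compact polar sets $\{K_j\}_{j\geq 1}$, all contained in $\partial\Omega$ and hence at distance at least $d_0$ from $p$, whose union covers $W$ up to an additional polar remainder that can be absorbed by iterating the same argument. For each such $K_j$, Evans' theorem supplies a finite positive Borel measure $\mu_j$ with $\supp\mu_j\subset K_j$ and $U^{\mu_j}(x)=+\infty$ for every $x\in K_j$; the bound above gives $U^{\mu_j}(p)\le \|\mu_j\|/d_0^{n-1}<\infty$.

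Finally, I would choose weights $c_j>0$ with
\[
c_j\le \frac{2^{-j}}{1+\|\mu_j\|+U^{\mu_j}(p)},
\]
and set $\mu:=\sum_j c_j\mu_j$. Then $\mu$ is a finite positive measure, $U^\mu(p)=\sum_j c_j U^{\mu_j}(p)\le 1$ by monotone convergence, and for each $x\in K_j$ one has $U^\mu(x)\ge c_j U^{\mu_j}(x)=+\infty$, so $U^\mu\equiv+\infty$ on $\bigcup_j K_j\supset W$. The main obstacle is the set-theoretic reduction of a general Borel polar set $W$ to a countable union of compact polar pieces: $W$ need not be $K_\sigma$ a priori, so Choquet capacitability (and, if necessary, an iteration on the residual polar set) is the step that needs the most care; once this is done, Kellogg, Evans, and the monotone summation fit together routinely.
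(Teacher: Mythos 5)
The step that carries the real weight — reducing to countably many compacts — is exactly where your proposal breaks down, and you flag it yourself. Choquet capacitability gives inner approximation of the \emph{capacity}, i.e.\ $\CCap(W)=\sup\{\CCap(K):K\subset W\text{ compact}\}$; since $\CCap(W)=0$, both sides vanish and this says nothing about how compact subsets of $W$ sit inside $W$ topologically. It does not produce a sequence of compacts whose union is $W$, nor even one that covers $W$ up to some controllable remainder — the ``remainder'' could be all of $W$. And the subsequent transfinite iteration you allude to has no reason to terminate in countably many steps, so the scheme does not close.

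The paper resolves this by a different, purely topological observation: setting
\[
S(x) = \int_0^1  \frac{{\rm Cap}(A(x,r,2r)\cap \Omega^c)}{r^{n-1}}\,\frac{dr}{r},
\]
one checks that $x\mapsto \CCap(A(x,r,2r)\cap\Omega^c)$ is lower semicontinuous (the intersection of the \emph{open} annulus with the closed set $\Omega^c$ behaves well under limits of the center), hence by Fatou $S$ is lower semicontinuous. Therefore $\{S>\lambda\}$ is open for every $\lambda$, the set of Wiener regular points $\{S=\infty\}=\bigcap_m\{S>m\}$ is a relative $G_\delta$ in $\partial\Omega$, and so $W$ is $F_\sigma$, hence $\sigma$-compact. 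Once $W=\bigcup_i K_i$ with each $K_i$ compact (and of zero capacity by Kellogg), the rest of your argument — Evans' theorem (Landkof, Theorem 3.1) on each $K_i$ and a weighted sum $\mu=\sum_i c_i\,\mu_i$ with $c_i$ small enough to make $\|\mu\|$ and $U^\mu(p)$ finite — is exactly what the paper does. Your elementary estimate $U^\nu(p)\le \|\nu\|/d_0^{n-1}$ is correct and would serve as a fine alternative to the paper's normalization by $\max\{U^{\mu_i}(p),\|\mu_i\|\}$, but the missing ingredient is the lower semicontinuity of the Wiener integral, not anything capacitary.
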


\begin{proof}
%Let $\eta:[0,1]\to [0,1]$ be a $\C$ function which equals $1$ on $[0,1/2]$ and vanishes on $[3/4,1]$.
%Define
%$$W_\eta(x) = \int  \frac{{\rm Cap}(A(x,r,2r)\cap \Omega^c)}{r^{n-1}}\,\eta(r)\,\frac{dr}{r} .$$
%It is immediate to check that a point $x\in\partial\Omega$ is regular if and only if $W_\eta(x)=\infty$.
For $x\in\partial \Omega$, denote $$S(x) = \int_0^1  \frac{{\rm Cap}(A(x,r,2r)\cap \Omega^c)}{r^{n-1}}\,\frac{dr}{r},$$
so that $x$ is regular if and only if $S(x)=\infty$.
Since $S$ is lower semicontinuous, for all $\lambda>0$ the set $\{x\in\R^{n+1}:S(x)>\lambda\}$ is open and thus the set of Wiener regular point is a $G_\delta$ set (relative to $\partial\Omega$). Thus the set $W$ of the irregular points from
$\partial\Omega$ is an
$F_\sigma$ set. Thus we can write
$$W=\bigcup_{j\geq 1} K_i,$$
where each $K_i$ is a compact subset of $\partial \Omega$.

By Kellog's Lemma \cite[p.232]{Landkof}, we know that $\CCap(W)=0$ and thus $\CCap(K_i)=0$ for all $i$. Then, by Theorem 3.1 of \cite{Landkof}, for
each $i$ there exists a finite measure $\mu_i$ such that $U^{\mu_i}(x)=\infty$ for all $x\in K_i$ and
$U^{\mu_i}(x)<\infty$ for all $x\not \in K_i$. So the measure
$$
\mu
=
\sum_{i\geq 1}
\frac{1}{2^{i}\, \max\{U^{\mu_i}(p),\|\mu_i\|\}}\,\mu_i$$
satisfies the requirements of the lemma.
%then we have
%$$U^{\mu}(x)=\infty \quad\mbox{\!\!for all $x\in W$, \quad and\!\!}\quad U^{\mu}(p)=1.$$
\end{proof}

\vvv

We are now ready to prove Theorem \ref{teo1} for bounded domains $\re^{n+1}$, $n\ge 2$.
Let $p\in\Omega$.
Consider $E\subset\partial\Omega$ with $0<\HH^n(E)<\infty$ such that the harmonic measure $\omega^p|_E$ is absolutely continuous with respect to $\HH^n|_{E}$.
To prove the $n$-rectifiability of $\omega^p|_E$ it suffices to show that any
subset $F\subset E$ with $\omega^p(F)>0$ contains some $n$-rectifiable
subset $G$ with positive $\HH^n$ measure (hence the totally unrectifiable part of $E$ will have $w^p$-measure $0$).
To this end, we consider the measure $\mu$ in Lemma \ref{lempot}.
For a big $\lambda>0$ to be fixed below, we take the open set
$$V_\lambda = \{x\in\R^{n+1}:U^\mu(x)>\lambda\}.$$
Note that the set of irregular points $W$ from $\pom$ is contained in $V_\lambda\cap\pom$, for any $\lambda>0$.

Now we will construct an auxiliary domain $\wt\Omega$ (to which we will later apply Proposition \ref{propreg}) as follows. For each $x\in W$, consider a radius  $0<r_x\le\min\{1,d(p)/2\}$ such that
the closed ball $\bar B(x,r_x)$ is contained in $V_\lambda$, and we apply the Besicovitch covering lemma to get
a family of closed balls $B_i$, $i\in I$, centered at points from $W$, which cover $W$ and have bounded overlap.
Then we define
$$\wt \Omega = \Omega\setminus \bigcup_{i\in I} B_i.$$

We will show now that $\wt \Omega$ is open. Indeed, we claim that
\begin{equation}\label{eqcla**}
\overline{\bigcup_{i\in I} B_i} \setminus \bigcup_{i\in I} B_i \subset\partial\Omega.
\end{equation}
This inclusion implies that
$$ \Omega\setminus \overline{\bigcup_{i\in I} B_i} =
\Omega\setminus \left[\left(\overline{\bigcup_{i\in I} B_i} \setminus \bigcup_{i\in I} B_i\right)\cup
\bigcup_{i\in I} B_i\right] = \Omega\setminus \bigcup_{i\in I} B_i = \wt\Omega,$$
and thus ensures that $\wt\Omega$ is open.

To show our claim \rf{eqcla**} consider $x\in\overline{\bigcup_{i\in I} B_i} \setminus \bigcup_{i\in I} B_i$
and recall that, by construction each ball $B_i$ is closed. Then $x$ must be the limit of a sequence of points
belonging to infinitely many different balls $B_{i_k}$, $i_k\in I$. It turns out that then we have $r(B_{i_k})\to 0$.
This is a straightforward consequence of the fact that
any family of balls $B_j$, $j\in J\subset I$, such that
$\dist(B_j,x)\leq 1$ and $0<\ve \leq r(B_j)\leq 1$ must be finite, by the finite overlap of the family $\{B_i\}_{i\in I}$.
The fact
that  $r(B_{i_k})\to0$ implies that $x\in\partial \Omega$, since the balls $B_{i,k}$ are centered in $\partial\Omega$.

From \rf{eqcla**} we also deduce that
\begin{equation}\label{eqcla**2}
\partial\wt\Omega\subset \biggl(\partial\Omega
\setminus \bigcup_{i\in I} B_i\biggr) \cup\bigcup_{i\in I}\partial B_i.
\end{equation}
To see this, write
\begin{align*}
\partial\wt\Omega = \partial\biggl(\Omega\setminus \bigcup_{i\in I} B_i\biggr) & \subset
\partial\Omega \cup   \overline{\bigcup_{i\in I} B_i} \\
& = \partial\Omega \cup
\biggl(\,\overline{\bigcup_{i\in I} B_i} \setminus \bigcup_{i\in I} B_i\biggr)
\cup \bigcup_{i\in I} B_i \\
& = \partial\Omega
\cup \bigcup_{i\in I} B_i = \biggl(\partial\Omega
\setminus \bigcup_{i\in I} B_i\biggr)
\cup \bigcup_{i\in I} B_i
.
\end{align*}
On the other hand, by construction the interior of each ball $B_i$ lies in the exterior of $\wt\Omega$, and thus
$$\partial\wt\Omega = \partial\wt\Omega\setminus {\rm ext}(\wt\Omega)
 \subset \biggl[\biggl(\partial\Omega
\setminus \bigcup_{i\in I} B_i\biggr) \cup\bigcup_{i\in I}B_i\biggr] \setminus {\rm ext}(\wt\Omega) \subset \biggl(\partial\Omega
\setminus \bigcup_{i\in I} B_i\biggr) \cup\bigcup_{i\in I}\partial B_i,$$
which proves \rf{eqcla**2}.

We wish to show now that, if $\lambda$ has been chosen big enough, then there exists some subset
$\wt F\subset F\cap\partial\wt\Omega$ with positive harmonic measure $\wt\omega^p$  (this is the harmonic measure for $\wt\Omega$ with pole at $p\in\wt\Omega$, that $p\in\wt\Omega$ follows from the fact that $r(B_i)\le d(p)/2$)  such that $\wt\omega^p|_{\wt F}\ll \HH^n|_{\wt F}$. Denote
$$\wt B= \bigcup_{i\in I}\partial B_i\quad\text{ and }\quad \wt G= \partial\wt\Omega\setminus \wt B.$$
Note that \rf{eqcla**2} tells us that $\wt G\subset \partial \Omega\cap \partial\wt\Omega$.
By  a formal application of the maximum principle and the construction of $\wt\Omega$ we have
\begin{equation}\label{max-pple-1}
\wt\omega^p(\wt G\setminus F)\leq \omega^p(\wt G\setminus F).
\end{equation}
We would like to emphasize that our use of the maximum principle strongly uses the construction of harmonic measure solutions using Perron's method. We are working in a regime where the
Wiener test may fail, and the involve solutions are not
Perron solutions for the same domain, nor are they continuous on the closures of
the respective domains under consideration. Hence, classical maximum principle does not apply. We shall give a rigorous justification at the end of the proof, see \ref{eq:just-max-pple}.

On the other hand, observe that $\wt B\subset V_\lambda$. Then we consider the function $f(x) = \frac1\lambda \,U^\mu(x)$, which
is superharmonic in $\R^{n+1}$, with $f(x)>1$ for all $x\in V_\lambda$ (and thus for all $x\in\wt B$), and $f(p)\leq 1/\lambda$.  By the maximum principle (here it is just the Perron method), then we deduce that
$$\wt\omega^p(\wt B) \leq f(p)\leq \frac1\lambda.$$
Hence, choosing $\lambda =2/\omega^p(F)$,
\begin{align*}
\wt\omega^p(F\cap\partial\wt\Omega) & \geq \wt\omega^p(\partial\wt\Omega) - \wt\omega^p(\wt G\setminus F) - \wt\omega^p(\wt B)\\
& \geq \omega^p(\partial\Omega) - \omega^p(\wt G\setminus F) - \wt\omega^p(\wt B) \\&\geq \omega^p(F) - \wt\omega^p(\wt B)
\\&\geq \frac12\,\omega^p(F)>0.
\end{align*}
Then we take $\wt F:=F\cap\partial\wt\Omega$, so that $\wt\omega^p(\wt F)>0$. Further, by the maximum principle (which again requires some justification, see \eqref{eq:just-max-pple}) and the fact that $
\omega^p|_{F}\ll \HH^n|_{F}$, we infer that
$$\wt\omega^p|_{\wt F}\ll \omega^p|_{\wt F}\ll \HH^n|_{\wt F}.$$

We intend to apply Proposition \ref{propreg} to show that $\wt\omega^p|_{\wt F}$ is $n$-rectifiable.
To this end, it remains to check that
 $\wt\Omega$ is Wiener regular. That is, all the points $x\in\partial\wt\Omega$ are Wiener regular for
$\wt\Omega$. We have to show that
$$\int_0^1 \frac{{\rm Cap}(A(x,r,2r)\cap \wt\Omega^c)}{r^{n-1}}\,\frac{dr}{r} = \infty$$
for all $x\in\partial\wt\Omega$. By \rf{eqcla**2} we know that either $x\in\left(\partial\Omega
\setminus \bigcup_{i\in I} B_i\right)
$ or $x\in\partial B_i$ for some
$i\in I$.
In the latter case we have
$$\int_0^1 \frac{{\rm Cap}(A(x,r,2r)\cap \wt\Omega^c)}{r^{n-1}}\,\frac{dr}{r} \geq
\int_0^1 \frac{{\rm Cap}(A(x,r,2r)\cap B_i)}{r^{n-1}}\,\frac{dr}{r}=\infty,$$
since the  complement of any ball $B_i$ is Wiener regular.

If $x\in\partial\Omega \setminus \bigcup_{i\in B_i} B_i$, then we know that $x$ is Wiener regular for $\Omega$, since
$W\subset\bigcup_{i\in I} B_i$. Thus, using just that $\wt\Omega^c\supset \Omega^c$, we obtain
$$\int_0^1 \frac{{\rm Cap}(A(x,r,2r)\cap \wt\Omega^c)}{r^{n-1}}\,\frac{dr}{r} \geq
\int_0^1 \frac{{\rm Cap}(A(x,r,2r)\cap \Omega^c)}{r^{n-1}}\,\frac{dr}{r} =\infty.$$
So the proof that $\wt \Omega$ is Wiener regular is concluded.

Now we can apply Proposition \ref{propreg} to deduce that $\tilde{\omega}^p|_{\wt F}$
is rectifiable. In other words, there exists
an $n$-rectifiable subset $G\subset\wt F$ and $g\in L^1(\HH^n|_G)$ such that
$$\wt\omega^p|_{\wt F} = g\,\HH^n|_G.$$
The fact that $\wt\omega^p(\wt F)>0$ ensures that $\HH^n(G)>0$, as wished.

To conclude this proof we need to justify the use of maximum principle which is based on Perron's construction of harmonic measure. We are going to show that
\begin{equation}
\label{eq:just-max-pple}
\wt\omega^p(\mathcal{O})\leq \omega^p(\mathcal{O}),
\qquad\mbox{for every Borel set }\mathcal{O}\subset\pom\cap \partial\wt\Omega.
\end{equation}
Set $u(x):=\omega^x(\mathcal{O})$, $x\in\Omega$, which is the harmonic measure solution associated with the boundary data $\chi_{\mathcal{O}}\in L^\infty(\pom)$ via Perron's method, see for instance \cite[Chapter 2]{GT}. We pick $\varphi$ an arbitrary superfunction relative to $\chi_{\mathcal{O}}$ for $\Omega$, that is, $\varphi\in C(\overline{\Omega})$, $\varphi$ is superharmonic  in $\Omega$, and $\varphi\ge\chi_{\mathcal{O}}$ in $\pom$. Let us recall that $u$ is precisely the infimum of all these superfunctions.
Note that $\phi\equiv 0$ is a subfunction relative to $\chi_{\mathcal{O}}$, since it is clearly harmonic, continuous everywhere and $\phi\le \chi_{\mathcal{O}}$ on $\pom$. Hence, by the maximum principle for subharmonic and superharmonic functions
that are continuous up to the boundary, we conclude that $0\le \varphi$ in $\overline{\Omega}$.

Let us check that $\varphi\ge\chi_{\mathcal{O}}$ in $\partial\wt\Omega=(\partial\wt\Omega\cap\Omega)\cup(\partial\wt\Omega\cap\pom)$. Our choice of $\varphi$ guarantees that $\varphi\ge \chi_{\mathcal{O}}$ in $ \partial\wt\Omega\cap\Omega$. On the other hand, if $x\in \partial\wt\Omega\cap\Omega$ we have $\varphi(x)\ge 0=1_{\mathcal{O}}(x)$. On the other hand, clearly
 $\varphi\in C(\overline{\wt \Omega})$ is superharmonic in $\wt \Omega$. We have then show that $\varphi$ is a superfunction relative to $\chi_{\mathcal{O}}$ for $\wt\Omega$ and hence Perron's method in $\wt\Omega$ gives that
$\wt\omega^p(\mathcal{O})\le \varphi$. We now take the infimum over all such $\varphi$ to conclude by  Perron's method in $\Omega$ that $\wt\omega^x(\mathcal{O})\leq \omega^x(\mathcal{O})$ holds for every $x\in\wt\Omega$. This completes our proof.

\section{Proof of Theorem \ref{teo1} in the planar case $n+1=2$}

To start, as in the higher dimensional case, we immediately reduce Theorem \ref{teo1} to the case of bounded domains.

\subsection{Logarithmic capacity, Wiener regular points and Green function}

The logarithmic potential of a measure $\mu$ in $\R^{2}$ is defined as
$$
U^\mu(x)
:= \int \log \frac1{|x-y|}\,d\mu(y).
$$
The Wiener capacity of a Borel compact set $A\subset \R^{n+1}$ is then defined by
$$
\CCap(A) = \sup\{\mu(A):\,U^\mu(x)\leq 1,\ \,\forall x\in\R^{n+1}\},
$$
(see, e.g., \cite{Landkof}, p. 168, in combination with \cite{Landkof}, Theorem 2.8).

Given $\Omega\subsetneq\re^{n+1}$. We say that a point $x\in\partial\Omega$ is Wiener regular for $\Omega$ (or just regular) if
$$
\int_0^1 {\rm Cap}(A(x,r, 2r)\cap \Omega^c)\,\log \frac 1r\,\frac{dr}{r} = \infty.
$$
If $x$ is not regular, we say that it is irregular. We say that $\Omega$ is Wiener regular if every $x\in\pom$ is Wiener regular. (See \cite{Landkof}, Theorem 5.6).

 In this case the Green function is defined as follows. Much as before we set $G$ as in \eqref{green} with $\mathcal{E}$ replaced by $\frac1{2\,\pi} \log \frac1{|x|}$ which will act as a fundamental solution:
\begin{equation}
\label{eq:Green-2}
G(x,y)
=
\frac1{2\,\pi} \log \frac1{|x-y|}-
\int_{\partial\Omega} \frac1{2\,\pi} \log \frac1{|x-z|}d\omega^y(z).
\end{equation}
Now
we can,  \textit{mutatis mutandis}, repeat the argument carried out in Section \ref{sect-Green} to conclude much as before that $G(x,\cdot)\in W_0^{1,2}(\Omega)$ away from $x$ when the domain is Wiener regular.

At this point we can formulate Proposition~\ref{propreg} identically to the original statement, but with $n=1$ and, respectively, with the definition of Wiener regularity as above. Let us discuss the modifications in its proof compared to the higher dimensional case.

\subsection{Proof of the Key Lemma in the planar case $n+1=2$}
We recall that in this section the domain is assumed to be Wiener regular. We note  that the arguments to prove Lemma \ref{l:w>G} fail in the planar case. Therefore this cannot be
applied to prove the Key Lemma and some changes are required. We follow the same scheme and notation and highlight the important modifications.

We claim that for any constant $\alpha\in \R$,
\begin{equation}\label{eq1**}
\bigl|\wt\RR_r\omega^p(x)\bigr|\lesssim \frac{1}{r}
\left(\avint_{\Omega\cap B(x,3r)} |G(y,p) - \alpha|^2\,
\,dm(y)\right)^{1/2} + \frac1{d(p)}.
\end{equation}
To check this, recall that in the proof of the Key Lemma for $n\geq2$ we showed that
$$|\wt\RR_r\omega^p(x)| \leq |I| + |II| + |III|,$$
with the terms $I$, $II$ and $III$ being defined in \rf{eqdef123}. Note that we the formal integration by parts argument
can be done in a more or less standard way following for instance the ideas in \cite{HM}
with the appropriate changes. Details are left to the interested reader.

Much as before we can show that
$$
|III|\lesssim\frac1{d(p)^n}
$$
(now with $n=1$)
and also that
$$
|II| \leq  c\left(\avint_{B(x,2r)} |\nabla_y G(y,p)|^2\,dm(y)\right)^{1/2},
$$
which by Caccioppoli's inequaltity gives
$$
|II|
\leq \frac c{r}\left(\avint_{B(x,3r)} |G(y,p)-\alpha|^2\,dm(y)\right)^{1/2}
$$
for any $\alpha\in\R$. Again, we extend the Green function by $0$ outside of $\Omega$,
Concerning the term $I$, we have as before
$$|I|\leq \frac c{d(p)^n} + |I_{a,2}|,$$
with
\begin{align}\label{eqjj2}
I_{a,2} :&=
\int_{\R^{n+1}\setminus B} G(y,p)\, {\rm div}\bigl[\nabla K(x-\cdot)\,\vphi_r(x-\cdot)\bigr](y)\,dm(y)\\
& =
\int_{\R^{n+1}\setminus B} (G(y,p)-\alpha)\, {\rm div}\bigl[\nabla K(x-\cdot)\,\vphi_r(x-\cdot)\bigr](y)\,dm(y) \nonumber\\
&\quad + \alpha\int_{\R^{n+1}\setminus B} {\rm div}\bigl[\nabla K(x-\cdot)\,\vphi_r(x-\cdot)\bigr](y)\,dm(y).\nonumber
\end{align}
To estimate the last integral on the right hand side, observe first that the integrand is compactly supported because
$${\rm div}\bigl[\nabla K(x-\cdot)\,\vphi_r(x-\cdot)\bigr](y) = \nabla_y K(x-y)\cdot \nabla_y\vphi_r(x-y).$$
Then, for any big $R>0$ so that $B(x,R)$ contains $B$, by the divergence theorem the last integral on the right hand
side of \rf{eqjj2} equals
\begin{align*}
J & :=
\int_{B(x,R)\setminus B} {\rm div}\bigl[\nabla K(x-\cdot)\,\vphi_r(x-\cdot)\bigr](y)\,dm(y)\\
& = \int_{\partial B(x,R)}\nabla_y K(x-y)\,\cdot N(y)\,d\sigma(y) +
\int_{\partial B}\nabla_y K(x-y)\cdot N(y)\,d\sigma(y),
\end{align*}
where  we took into account that $\vphi_r(x-\cdot)$ is identically $1$ on $\partial B$ and $\partial B(x,R)$. In the previous expression $N(\cdot)$ stands for the unit normal pointing to the exterior in the first integral and pointing to the interior in the second integral.
It is easy to check that the first integral on the right hand side is bounded above by $C/R$ and the second one
by $C\,r(B)/d(p)^2$. So letting $R\to\infty$ we obtain
$$|J|\lesssim \frac{r(B)}{d(p)^2}.$$
Hence we deduce that
$$|I_{a,2}| \leq
\int_{\R^{n+1}\setminus B} |G(y,p) - \alpha|\, {\rm div}\bigl[\nabla K(x-\cdot)\,\vphi_r(x-\cdot)\bigr](y)\,dm(y)
+ \frac{C\,|\alpha|\,r(B)}{d(p)^2}.$$
To estimate the first integral on the right hand side we proceed as with the analogous integral with $\alpha=0$ in the
proof of the Key Lemma in the case $n>1$:
since $\supp (\nabla_y\vphi_r(x-\cdot))\subset A(x,r,2r)$, we get
\begin{align*}
\int_{\Omega\cap A(x,r,2r)} |G(y,p)-\alpha|\, \bigl|\nabla_y K(x-y)\bigl|\,&\bigl|\nabla_y\vphi_r(x-y)\bigr|
\,dm(y)\\ &\leq \frac{c}{r}\,\,
\avint_{B(x,2r)} |G(y,p)-\alpha|\,
\,dm(y)\\
&\leq \frac{c}{r}
\left(\avint_{ B(x,2r)} |G(y,p)-\alpha|^2\,
\,dm(y)\right)^{1/2}.
\end{align*}
Gathering all the estimates for the terms $I$, $II$ and $III$, we obtain
$$\bigl|\wt\RR_\ve\omega^p(x)\bigr|\lesssim \frac{1}{r}
\left(\avint_{B(x,3r)} |G(y,p) - \alpha|^2\,
\,dm(y)\right)^{1/2} + \frac1{d(p)} + \frac{|\alpha|\,r(B)}{d(p)^2}.$$
Since the estimates above are uniform on $r(B)$ (for $r(B)$ small enough), letting $r(B)\to0$, our claim
\rf{eq1**} follows.

Choosing $\alpha=G(z,p)$ with $z\in 3B$ in \rf{eq1**}, averaging with respect Lebesgue measure for such $z$'s, and applying
 applying H\"older's inequality, we get
$$\bigl|\wt\RR_\ve\omega^p(x)\bigr|\lesssim \frac1{r^3}\left(\iint_{B(x,3r)\times B(x,3r)} |G(y,p) - G(z,p)|^2\,dm(y)\,dm(z)\right)^{1/2} + \frac1{d(p)},$$
where we understand that $G(z,p)=0$ for $z\not\in\Omega$.
Now for $y,z\in B(x,3r)$ and $p$ far away we write (cf. \eqref{eq:Green-2})
\begin{align*}
2\,\pi\,(G(y,p) - G(z,p))
& =
\log\frac{|z-p|}{|y-p|} - \int_{\pom} \log\frac{|z-\xi|}{|y-\xi|} \,d\omega^p(\xi) \\
& =
\left(\log\frac{|z-p|}{|y-p|} - \int_{\partial\Omega} \phi\left(\frac{\xi-x}{r}\right)\,\log\frac{|z-\xi|}{|y-\xi|} \,d\omega^p(\xi)\right) \\
&\quad- \int_{\partial\Omega} \left(1-\phi\left(\frac{\xi-x}{r}\right)\right)\log\frac{|z-\xi|}{|y-\xi|} \,d\omega^p(\xi)\\
&= A_{y,z}  + B_{y,z},
\end{align*}
where $\phi$ is a radial smooth function such that $\phi\equiv 0$ in $B(0,4)$ and $\phi\equiv 1$ in $B(0,5)$.
Notice that the above identities also hold if $y,z\not\in\Omega$.
Let us observe that
$$\frac{|z-p|}{|y-p|}\approx 1$$
and
$$
\frac{|z-\xi|}{|y-\xi|}\approx 1\quad\mbox{ for $\xi\not \in B(x,4r)$,}
$$
We claim that
\begin{equation}\label{eqlem33}
|A_{y,z}|\lesssim \frac{\omega^p(B(x,6\delta^{-1}r))}{\inf_{z\in B(x,6r)\cap \Omega} \omega^z(B(x,6\delta^{-1}r))}.
\end{equation}
We defer the details till the end of the proof.
Using Bourgain's estimate (cf. Lemma \ref{lembourgain}) we get
$$\inf_{z\in B(x,6r)\cap \Omega}\omega^{z}(B(x,6\delta^{-1}r))\gtrsim \frac{\mu(B(x,6r))}{r}\geq \frac{\mu(\wt B_Q)}{r}.$$
and thus
$$\frac{|A_{y,z}|}r\lesssim \frac{\omega^p(B(x,6\delta^{-1}r))}{\mu(\wt B_Q)}\lesssim
\frac{\omega^{p}(Q)}{\mu(Q)},$$
by the doubling properties of $Q$ (for $\omega^p$) and the choice of $\wt B_Q$.

To deal with the term $B_{y,z}$ first we use H\"older's inequality:
\begin{align*}
|B_{y,z}|^2&\leq \omega^p(B(x,5r))\int_{B(x,5r)} \left|\log\frac{|z-\xi|}{|y-\xi|}\right|^2 \,d\omega^p(\xi)\\
& \lesssim
\omega^p(B(x,5r))\int_{ B(x,5r)} \left(\left|\log\frac{r}{|y-\xi|}\right|^2 +  \left|\log\frac{r}{|z-\xi|}\right|^2\right) \,d\omega^p(\xi).
\end{align*}
Thus
\begin{align*}
\iint_{B(x,3r)\times B(x,3r)} &|B_{y,z}|^2\,dm(y)\,dm(z) \\
&\lesssim
\omega^p(B(x,5r))\,r^2\int_{B(x,3r)}\int_{ B(x,3r)} \left|\log\frac{r}{|y-\xi|}\right|^2 \,d\omega^p(\xi)\,dm(y).
\end{align*}
Notice that for all $\xi\in B(x,5r)$,
$$\int_{B(x,3r)}\left|\log\frac{r}{|y-\xi|}\right|^2 \,dm(y)\lesssim r^2.$$
So by Fubini we obtain
$$\iint_{B(x,3r)\times B(x,3r)} |B_{y,z}|^2\,dm(y)\,dm(z) \lesssim \omega^p(B(x,4r))^2\,r^4.$$
That is,
$$
\frac1{r^3}\left(\iint_{B(x,3r)\times B(x,3r)} |B_{y,z}|^2\,dm(y)\,dm(z)\right)^{1/2}\lesssim \frac{\omega^p(B(x,5r))}r
.$$
Together with the bound for the term $A_{y,z}$, this gives
%\chema{There is a small issue here since \eqref{eqcc0} is claimed to hold $w^p$-a.e.!!! See that reference for another comment.}

$$\bigl|\wt\RR_\ve\omega^p(x)\bigr|\lesssim \frac{\omega^{p}(Q)}{\mu(Q)} +\frac{\omega^p(B(x,5r))}r + \frac1{d(p)}\lesssim 1,$$
since $\M^1\omega^p(x)\lesssim1$ by \eqref{eqcc0}.
\vv

It remains now to show \rf{eqlem33}. The argument uses the ideas in Lemma \ref{l:w>G} with some modifications.
Recall that
\begin{align*}
A_{y,z}
&=
A_{y,z}(p) =
\log\frac{|z-p|}{|y-p|} - \int_{\partial\Omega} \phi\left(\frac{\xi-x}{r}\right)\,\log\frac{|z-\xi|}{|y-\xi|} \,d\omega^p(\xi)
\\
&=:
\log\frac{|z-p|}{|y-p|}-v_{x,y,z}(p)
\end{align*}
where $y,z\in B(x,3r)$ and $p$ is far away.
The two functions
$$
q\longmapsto A_{y,z}(q)\qquad\text{ and }\qquad q\longmapsto \frac{c\,\omega^q(B(x,6\delta^{-1}r))}{\inf_{z\in B(x,6r)\cap \Omega} \omega_\Omega^{z}(B(x,6\delta^{-1}r))}$$
 are harmonic in $\Omega\setminus B(x,6r)$. Note that for all $q\in\partial B(x,6r)$
we clearly have
$$|A_{y,z}(q)|\leq c\leq \frac{c\,\omega^q(B(x,6\delta^{-1}r))}{\inf_{z\in B(x,6r)\cap \Omega} \omega_\Omega^{z}(B(x,6\delta^{-1}r))}.$$
Note also that $v_{x,y,z}$ is a harmomic function associated with a smooth boundary data, and, in particular, the fact that domain is Wiener regularity implies that $v_{x,y,z}\in C(\overline{\Omega})$. Thus $A_{y,z}(x)=0$ for every $x\in\pom\setminus B(x,5)$. Hence we can apply maximum principle (this will require a justification completely analogous to that at the end of the proof of Lemma \ref{l:w>G})
and obtain as desired  \rf{eqlem33}.

\fiproof

\subsection{End of the proof Theorem~\ref{teo1} in the planar case $n+1=2$}
This section discusses modifications in the arguments of Section~\ref{secend} pertinent to the planar case.

First of all, Lemma~\ref{lempot} continues to hold for $n=1$ with the logarithmic potential $U^\mu$ defined as above. In its proof, one has to take
$$S(x) = \int_0^1  {\rm Cap}(A(x,r,2r)\cap \Omega^c) \,\log \frac 1r\,\,\frac{dr}{r}.$$
The Kellogg's Lemma in the planar case also can be found in \cite{Landkof}, p. 232 (note that the sets of zero logarithmic capacity and sets of zero Wiener capacity are identical, see, e.g., \cite{Landkof}, p. 167). Theorem 3.1 of \cite{Landkof} also extends to the context of logarithmic potential (see Remark on p. 182 of \cite{Landkof}), and the rest of the argument of Lemma~\ref{lempot} is the same as in the higher dimensional case.

At this stage, the argument of Theorem~\ref{teo1} follows verbatim, with the only addition of a logarithmic factor $\log \frac 1r$ in the integrals of capacitory expressions in the end of the proof.

\vvv

\end{document}